\theoremstyle{plain}
\newtheorem{theorem}{Theorem}[section]
\newtheorem{lemma}[theorem]{Lemma}
\newtheorem{corollary}[theorem]{Corollary}
\newtheorem{proposition}[theorem]{Proposition}
\theoremstyle{definition}
\newtheorem{definition}{Definition}[section]
\newtheoremstyle{dotless}{}{}{\itshape}{}{\bfseries}{}{ }{}
\theoremstyle{dotless}
\newtheorem{thm}{Theorem}
\newcommand{\wcol}{\mathrm{wcol}}
\newcommand{\col}{\mathrm{col}}
\newcommand{\td}{\mathrm{td}}
\newcommand{\tw}{\mathrm{tw}}
\newcommand{\Wreach}{\mathrm{WReach}}
\newcommand{\Sreach}{\mathrm{SReach}}
\newcommand{\Oof}{\mathcal{O}}
\newcommand{\Omof}{\Omega}
\newcommand{\N}{\mathbb{N}}
\newcommand{\ie}{i.e.\ }
\newcommand{\GHi}[1]{G[H_{\ge#1}]}
\title{On the Generalised Colouring Numbers\\[1mm]
  of Graphs that Exclude a Fixed Minor}
\author{Jan van den Heuvel\,\thanks{\,Department of Mathematics, London
    School of Economics and Political Science, London, United Kingdom;
    \texttt{j.van-den-heuvel@lse.ac.uk}, \texttt{d.quiroz@lse.ac.uk}.} \and
  Patrice Ossona de Mendez\,\thanks{\,Centre d'Analyse et de
    Math\'ematiques Sociales (CNRS, UMR 8557), Paris, France, and Computer
    Science Institute of Charles University (IUUK), Prague, Czech Republic.
    Supported by grant ERCCZ LL-1201 and CE-ITI P202/12/G061, and by the
    European Associated Laboratory ``Structures in Combinatorics'' (LEA
    STRUCO); \texttt{pom@ehess.fr}.}
  \and Daniel Quiroz\,\footnotemark[1]\\[1mm]
  \and Roman Rabinovich\,\thanks{\,Logic and Semantics, Technical
    University Berlin, Germany; \texttt{roman.rabinovich@tu-berlin.de}. Supported by the European Research Council (ERC) under the European Union’s Horizon 2020 research and innovation program (ERC consolidator grant DISTRUCT, agreement No. 648527)}
  \and Sebastian Siebertz\,\thanks{\,Institute of Informatics, Faculty of
    Mathematics, Informatics and Mechanics, University of Warsaw, Poland;
    \texttt{siebertz@mimuw.edu.pl}.}}
\begin{document}
\date{}
\maketitle

\unmarkedfntext{\begin{window}[0,r,{\includegraphics[height=6mm]{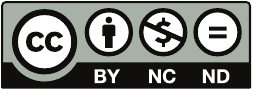}},{}]
    \noindent This work has been submitted to the \emph{European Journal of
      Combinatorics}. It is licensed under the Creative Commons
    Attribution-NonCommercial-NoDerivatives 4.0 International License.\\
    To view a copy of this license, visit
    \url{creativecommons.org/licenses/by-nc-nd/4.0/}.
  \end{window}}

\begin{abstract}
  \noindent
  The generalised colouring numbers $\col_r(G)$ and $\wcol_r(G)$ were
  introduced by Kierstead and Yang as a generalisation of the usual
  colouring number, and have since then found important theoretical and
  algorithmic applications.

  In this paper, we dramatically improve upon the known upper bounds for
  generalised colouring numbers for graphs excluding a fixed minor, from
  the exponential bounds of Grohe \emph{et al.}\ to a linear bound for the
  $r$-colouring number $\col_r$ and a polynomial bound for the weak
  $r$-colouring number $\wcol_r$. In particular, we show that if $G$
  excludes~$K_t$ as a minor, for some fixed $t\ge4$, then
  $\col_r(G)\le\binom{t-1}{2}\,(2r+1)$ and
  $\wcol_r(G)\le \binom{r+t-2}{t-2}\cdot\linebreak(t-3)(2r+1)\in
  \Oof(r^{\,t-1})$.

  In the case of graphs $G$ of bounded genus $g$, we improve the bounds to
  $\col_r(G)\le(2g+3)(2r+1)$ (and even $\col_r(G)\le5r+1$ if $g=0$, \ie if
  $G$ is planar) and $\wcol_r(G)\le\Bigl(2g+\binom{r+2}{2}\Bigr)\,(2r+1)$.

  \bigskip\noindent
  \emph{Keywords}: generalised colouring number, graph minor, graph genus,
  planar graph, tree-width, tree-depth

  \medskip\noindent
  \emph{2010 MSC}: 05C15 (Primary), 05C83 (Secondary)
\end{abstract}

\clearpage
\section{Introduction}

The \emph{colouring number $\col(G)$} of a graph $G$ is the minimum integer
$k$ such that there is a strict linear order $<_L$ of the vertices of $G$
for which each vertex $v$ has \emph{back-degree} at most $k-1$, \ie at most
$k-1$ neighbours $u$ with $u<_Lv$. It is well-known that for any graph $G$,
the chromatic number~$\chi(G)$ satisfies $\chi(G)\le \col(G)$.

Some generalisations of the colouring number of a graph have been studied
in the literature. These include the \emph{arrangeability} \cite{chen93}
used in the study of Ramsey numbers of graphs, the \emph{admissibility}
\cite{kierstead94}, and the \emph{rank} \cite{kierstead2000} used in the
study of the game chromatic number of graphs. But maybe the most natural
generalisation of the colouring numbers is the two series $\col_r$ and
$\wcol_r$ of \emph{generalised colouring numbers} introduced by Kierstead
and Yang \cite{kierstead03} in the context of colouring games and marking
games on graphs. As proved by Zhu \cite{zhu2009colouring}, these invariants
are strongly related to low tree-depth decompositions
\cite{nevsetvril2006tree}, and can be used to characterise bounded
expansion classes of graphs (introduced in \cite{POMNI}) and nowhere dense
classes of graphs (introduced in \cite{nevsetvril2011nowhere}). For more
details on this connection, we refer the interested reader
to~\cite{NesetrilOdM12}.

The invariants $\col_r$ and $\wcol_r$ are defined in a way similar to the
usual definition of the colouring number: the \emph{$r$-colouring number
  $\col_r(G)$} of a graph $G$ is the minimum integer $k$ such that there is
a linear order $<_L$ of the vertices for which each vertex $v$ can reach at
most $k-1$ other vertices smaller than~$v$ (in the order $<_L$) with a path
of length at most $r$, all internal vertices of which are greater than $v$.
For the \emph{weak $r$-colouring number $\wcol_r(G)$}, we do not require
that the internal vertices are greater than~$v$, but only that they are
greater than the final vertex of the path. (Formal definitions will be
given in Section~\ref{sec:preliminaries}.) As noticed already
in~\cite{kierstead03}, the two types of generalised colouring numbers are
related by the inequalities
\begin{equation*}
  \col_r(G)\:\le\: \wcol_r(G)\:\le\: (\col_r(G))^r.
\end{equation*}
If we allow paths of any length (but still restrictions on the position of
the internal vertices), we get the \emph{$\infty$-colouring number
  $\col_\infty(G)$} and the \emph{weak $\infty$-colouring number
  $\wcol_\infty(G)$}.

Generalised colouring numbers are an important tool in the context of
algorithmic sparse graphs theory. They play a key role for example in the
model-checking and enumeration algorithms for first-order logic on bounded
expansion and nowhere dense graph
classes~\cite{dvovrak2013testing,grohe2014deciding,kazana2013enumeration},
in Dvo\v{r}\'ak's linear time approximation algorithm for minimum
distance-$r$ dominating sets~\cite{dvovrak2013constant}, and in the
kernelisation algorithms for distance-$r$ dominating
sets~\cite{Drange16,eickmeyer2016neighborhood}.

An interesting aspect of generalised colouring numbers is that these
invariants can also be seen as gradations between the colouring number
$\col(G)$ and two important minor monotone invariants, namely the
\emph{tree-width $\tw(G)$} and the \emph{tree-depth $\td(G)$} (which is the
minimum height of a depth-first search tree for a supergraph of $G$
\cite{nevsetvril2006tree}). More explicitly, for every graph~$G$ we have
the following relations.

\begin{proposition}\label{pro:coltwtd}\mbox{}\\*
  \hbox to1.5\parindent{\rm (a)\hss}$\col(G)= \col_1(G)\le \col_2(G)\le
  \dots\le \col_\infty(G)= \tw(G)+1;$

  \smallskip\noindent
  \hbox to1.5\parindent{\rm (b)\hss}$\col(G)= \wcol_1(G)\le \wcol_2(G)\le
  \dots\le \wcol_\infty(G)= \td(G).$
\end{proposition}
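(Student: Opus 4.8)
My plan is to dispose of the routine parts and then put all the effort into the two endpoint identities. The equalities $\col(G)=\col_1(G)=\wcol_1(G)$ are immediate, since a path of length at most~$1$ between two distinct vertices is a single edge and has no internal vertices: for $r=1$ both the strongly and the weakly reachable set of a vertex~$v$ is exactly its set of $<_L$-smaller neighbours, which is what $\col$ counts. For the monotonicity $\col_r(G)\le\col_{r+1}(G)$ and $\wcol_r(G)\le\wcol_{r+1}(G)$ I would note that, for a \emph{fixed} linear order, any path witnessing strong (respectively weak) $r$-reachability also witnesses strong (respectively weak) $(r+1)$-reachability, so reachable sets only grow with~$r$; the same with $\infty$ in place of $r+1$, together with the observation that in an $n$-vertex graph these sets no longer change once $r\ge n-1$, shows that both chains stabilise, at $\col_\infty(G)$ and $\wcol_\infty(G)$ respectively. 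So the work is to prove $\col_\infty(G)=\tw(G)+1$ and $\wcol_\infty(G)=\td(G)$.

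For $\col_\infty(G)=\tw(G)+1$ I would argue both inequalities through chordal supergraphs. For ``$\ge$'', fix a linear order $<_L$ attaining $\col_\infty(G)$ and let $H\supseteq G$ be the graph on $V(G)$ in which $u$ and $w$ are adjacent precisely when the $<_L$-smaller of the two is strongly $\infty$-reachable from the larger. The crucial step is that in~$H$ the set of $<_L$-smaller neighbours of a vertex~$v$ equals its strongly $\infty$-reachable set in~$G$ \emph{and is a clique of~$H$}: given $u<_L w$ both strongly $\infty$-reachable from~$v$, glue a witnessing $v$--$u$ path to the reverse of a witnessing $v$--$w$ path and reduce the resulting $w$--$u$ walk to a path; each of its internal vertices, $v$ included, is $<_L$-above~$u$, so $u$ is strongly $\infty$-reachable from~$w$. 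A graph admitting a linear order in which the set of smaller neighbours of every vertex is a clique is chordal, so $H$ is chordal with $\omega(H)\le\col_\infty(G)$, and hence $\tw(G)\le\tw(H)=\omega(H)-1\le\col_\infty(G)-1$. For ``$\le$'', take a chordal $H\supseteq G$ with $\omega(H)=\tw(G)+1$, fix a perfect elimination ordering of~$H$, and let $<_L$ be its reverse. A short induction on length shows that in~$H$ every path from~$v$ to a $<_L$-smaller vertex~$u$, all of whose internal vertices are $<_L$-below~$u$, forces $uv\in E(H)$: at each step delete from the path the internal vertex eliminated earliest; when it is eliminated it is simplicial, and its two neighbours on the path — eliminated later, hence still present — are therefore adjacent, so one may shortcut there. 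Consequently the strongly $\infty$-reachable set of~$v$ in~$H$ is just its $<_L$-smaller neighbourhood, which by simpliciality has at most $\omega(H)-1=\tw(G)$ elements, and passing from~$H$ to the subgraph~$G$ only shrinks it; so $\col_\infty(G)\le\tw(G)+1$.

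For $\wcol_\infty(G)=\td(G)$ I would use elimination forests: rooted forests~$F$ whose ancestor--descendant closure $\mathrm{clos}(F)$ contains~$G$, recalling that $\td(G)$ is the least number of vertices on a root-to-leaf path over all such~$F$. For ``$\le$'', take an optimal~$F$ and let $<_L$ be any linear extension of its ancestor order, so that ancestors precede descendants. Using the elementary fact that the minimum-depth vertex of any path in $\mathrm{clos}(F)$ is an ancestor of every vertex of that path, one checks that if~$u$ is weakly $\infty$-reachable from~$v$ then $u$ must itself be that minimum-depth vertex — every other candidate is an internal vertex of the witnessing path or~$v$ itself, hence $<_L$-above~$u$, whereas a proper ancestor of~$u$ would be $<_L$-below it. Thus $u$ is an ancestor of~$v$, the weakly $\infty$-reachable set of~$v$ has at most $\mathrm{depth}(v)-1\le\td(G)-1$ elements, and $\wcol_\infty(G)\le\td(G)$. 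For ``$\ge$'', fix a linear order $<_L$ attaining $\wcol_\infty(G)$ and set $u\preceq v$ iff $u=v$ or $u$ is weakly $\infty$-reachable from~$v$. The same path-gluing argument shows that the weakly $\infty$-reachable set of~$u$ is contained in that of~$v$ whenever $u\preceq v$ — so $\preceq$ is transitive, hence a partial order — and that any two vertices weakly $\infty$-reachable from a common vertex are $\preceq$-comparable; therefore every principal down-set of $\preceq$ is a chain, \ie $\preceq$ is the ancestor order of a rooted forest~$F$. Every edge of~$G$ is a length-$1$ reaching path, so $G\subseteq\mathrm{clos}(F)$; and each chain of $\preceq$, being the union of a singleton $\{v\}$ with the weakly $\infty$-reachable set of~$v$, has at most $\wcol_\infty(G)$ elements, which bounds the height of~$F$. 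Hence $\td(G)\le\wcol_\infty(G)$.

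The two initial equalities and the monotonicity need no new idea; the content is entirely in the endpoint identities. The one thing that has to be found there is that, for an optimal order, the reachable sets glue into a chordal graph (for $\col_\infty$) or into a rooted-forest order (for $\wcol_\infty$) — via the trick of gluing two reachability paths at their common endpoint and shortcutting the walk to a path whose internal vertices stay ``high'' — and, in the converse directions, that a one-line induction on path length (using simpliciality for $\col_\infty$, minimum depth for $\wcol_\infty$) transfers bounds from the chordal completion or the elimination forest back to the order. After that everything is bookkeeping: which vertices count as internal, and how they sit in the $<_L$-order relative to the elimination order or the depth. I expect that bookkeeping to be fiddly rather than deep, so no single step looks like a serious obstacle.
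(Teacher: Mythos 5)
Your proposal is correct in substance, and for part (a) it is essentially the paper's own route: the auxiliary graph $H$ you attach to an optimal order (join each vertex to the smaller vertices it strongly $\infty$-reaches) is precisely the fill-in $G_L$ of Subsection~2.2, and your two directions are the standard equivalence between elimination orderings / chordal completions and tree-width that the paper compresses into ``it is easily checked''. The real difference is part (b): the paper does not prove $\wcol_\infty(G)=\td(G)$ at all but cites Lemma~6.5 of Ne\v{s}et\v{r}il and Ossona de Mendez, whereas you give a self-contained elimination-forest proof (weakly reachable vertices are ancestors for a depth-respecting order; conversely, weak $\infty$-reachability from an optimal order is a forest order whose closure contains $G$ and whose chains have size at most $\wcol_\infty(G)$). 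That buys independence from the reference at the cost of a page of standard poset bookkeeping; the paper buys brevity. Two local misstatements should be repaired, though neither affects the structure. First, in the clique step of the lower bound for $\col_\infty$, what you need (and what your glued walk actually delivers) is that all internal vertices are above $w$, not merely above $u$: the internal vertices of the two original paths are above $v\ge_L w$ and $v$ itself is above $w$, so $u$ is indeed strongly $\infty$-reachable from $w$, but ``above $u$'' alone would only give weak reachability. Second, your auxiliary claim in the upper bound should require the internal vertices of the path to be above $v$ (equivalently, eliminated before both endpoints), not ``below $u$'': as literally stated the claim is false, and the shortcut step you describe needs the earliest-eliminated internal vertex to have both path-neighbours still present, which fails if an endpoint can be eliminated first; since you only apply the claim to strong-reachability paths, whose internal vertices are above $v$, the intended statement is the one your induction proves.
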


\noindent
The equality $\col_\infty(G)=\tw(G)+1$ was first proved in~\cite{Grohe15};
for completeness we include the proof in Subsection~\ref{sub2.2}. The
equality $\wcol_\infty(G)=\td(G)$ is proved in
\cite[Lemma~6.5]{NesetrilOdM12}.

As tree-width~\cite{halin1976s} is a fundamental graph invariant with many
applications in graph structure theory, most prominently in Robertson and
Seymour's theory of graphs with forbidden minors~\cite{robertsonseymour},
it is no wonder that the study of generalised colouring numbers might be of
special interest in the context of proper minor closed classes of graphs.
As we shall see, excluding a minor indeed allows us to prove strong upper
bounds for the generalised colouring numbers.

Using probabilistic arguments, Zhu~\cite{zhu2009colouring} was the first to
give a non-trivial bound for $\col_r(G)$ in terms of the densities of
shallow minors of $G$. For a graph $G$ excluding a complete graph~$K_t$ as
a minor, Zhu's bound gives
\begin{equation*}
  \col_r(G)\:\le\: 1+q_r,
\end{equation*}
where $q_1$ is the maximum average degree of a minor of $G$, and $q_i$ is
inductively defined by $q_{i+1}=q_1\cdot q_i^{2i^2}$.

Grohe \emph{et al.}\ \cite{Grohe15} improved Zhu's bounds as follows:
\begin{equation*}
  \col_r(G)\:\le\: (crt)^{r},
\end{equation*}
for some (small) constant $c$ depending on $t$.

Our main results is an improvement of those bounds for the generalised
colouring numbers of graphs excluding a minor.

\begin{theorem}\label{thm:col}\mbox{}\\*
  Let $H$ be a graph and $x$ a vertex of $H$. Set $h=|E(H-x)|$, and let
  $\alpha$ be the number of isolated vertices of $H-x$. Then for every
  graph $G$ that excludes $H$ as a minor, we have
  \begin{equation*}
    \col_r(G)\:\le\: h\cdot(2r+1)+\alpha.
  \end{equation*}
\end{theorem}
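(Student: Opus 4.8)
The plan is to prove the contrapositive: assuming $\col_r(G)>h\cdot(2r+1)+\alpha$, I will exhibit $H$ as a minor of $G$. It helps to reformulate the hypothesis via a deletion process. Producing a linear order of $V(G)$ from its largest vertex downwards amounts to repeatedly deleting a vertex from the current graph; if $W$ is the set of vertices already deleted and $z$ is deleted next, then in the resulting order the strong $r$-reachable set of $z$ is exactly the set $R(z,W)$ of all $u\in V(G)\setminus(W\cup\{z\})$ joined to $z$ by a path of length at most $r$ whose internal vertices all lie in $W$. Hence $\col_r(G)\le h(2r+1)+\alpha$ would follow if, for every $W\subsetneq V(G)$, there were a vertex $z\notin W$ with $|R(z,W)|\le h(2r+1)+\alpha-1$. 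Running the deletion process greedily (at each step deleting a vertex minimising $|R(\cdot,W)|$) and using that this fails, I obtain a set $W$ such that $|R(z,W)|\ge h(2r+1)+\alpha$ for every $z\notin W$; I then fix one such $z$ and set $S:=R(z,W)$, so that $z\notin S\subseteq V(G)\setminus W$ and $|S|\ge h(2r+1)+\alpha$. It will matter in the last step to run the process so that the deleted part $W$ stays ``thin'' --- for instance so that each component of $G[W]$ has a bounded-size neighbourhood in the current graph.

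With such $W$, $z$ and $S$ in hand, I would build the minor model of $H$. Let $B$ be a BFS tree of $z$ inside $G[W\cup\{z\}]$ of depth at most $r-1$; by the definition of $S$, every vertex of $S$ is adjacent to some vertex of $B$. The set $V(B)$, which is connected, will serve as the branch set of the distinguished vertex $v$ of $H$: after contracting it, the resulting vertex is adjacent to all of $S$. It remains to realise $H-v$ as a minor of $G-V(B)$ in such a way that every branch set meets $S$, for then the branch sets of the $H$-neighbours of $v$ are automatically adjacent to the branch set of $v$. The $\alpha$ isolated vertices of $H-v$ cost almost nothing: each is assigned a distinct vertex of $S$ as a singleton branch set, using up $\alpha$ vertices of $S$. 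For the edge-bearing part of $H-v$ I would argue by induction on $|E(H-v)|$, peeling off a positive-degree vertex of $H-v$ at a time; each of the $h$ edges of $H-v$ is then realised by a short connection between two ``hubs'' chosen from $S$, and the bookkeeping is designed so that one such connection spends at most $2r+1$ vertices of $S$. Thus the $h$ edges together spend at most $h(2r+1)$ vertices, which is precisely what $|S|-\alpha\ge h(2r+1)$ makes available. Combining the branch set $V(B)$ with the resulting model of $H-v$ yields $H\minor G$, contradicting the choice of $G$.

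The routing in the second step is the genuine obstacle. Since $S$ has no edges a priori, faithfully realising the edge pattern of $H-v$ forces one to route through the part of $G$ outside $W\cup\{z\}$ (together with the unused part of $W$), keeping the $h$ routing paths pairwise disjoint and disjoint from $V(B)$. This is exactly where the thinness of $W$ is used: if the deleted part $W$ interacts with the remaining graph only through small interfaces, then the failure of the deletion process forces $S$ to be richly enough interconnected through $G-(W\cup\{z\})$ to carry a model of $H-v$, while that same failure is what delivers the forbidden minor. I expect nearly all of the difficulty --- and the precise reason the factor is $2r+1$ and an isolated vertex costs only $1$ --- to lie in making this trade-off between ``$W$ is thin'' and ``$S$ is large'' quantitative; the BFS-tree step that produces the branch set of $v$ is routine. (Alternatively one can cast the whole argument as an induction on $|V(G)|$ via a minimal counterexample, deleting $V(B)\cup\{z\}$ and invoking the inductive hypothesis on the smaller, still $H$-minor-free graph $G-V(B)-\{z\}$; the routing obstacle is unchanged.)
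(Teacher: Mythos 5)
There is a genuine gap, and it sits exactly where you flag it: the step that turns ``one vertex $z$ strongly $r$-reaches at least $h(2r+1)+\alpha$ vertices through the deleted set $W$'' into a model of $H$. The easy half of your plan is fine (the truncated BFS tree $B$ of depth at most $r-1$ in $G[W\cup\{z\}]$ is connected and dominates all of $S=R(z,W)$, so it can serve as the branch set of $v$), but the largeness of the single set $S$ at a single stuck configuration carries essentially no information about how the vertices of $S$ are interconnected in $G-V(B)$: they may lie in many different components of $G-W-z$, and nothing forces the existence of $h$ pairwise disjoint connections avoiding $B$ that realise the edge pattern of $H-v$ with every branch set meeting $S$. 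Your proposed fix --- run the deletion greedily while keeping $W$ ``thin'' --- undercuts the logic that produced the stuck configuration in the first place: if you constrain which vertices may be deleted, then getting stuck only tells you that every remaining $z$ either has $|R(z,W)|\ge h(2r+1)+\alpha$ or would violate thinness, which is strictly weaker than what you need; and with the unconstrained process the set $W$ you obtain has no structure at all. So the ``trade-off between $W$ is thin and $S$ is large'' that you defer is not bookkeeping; it is the whole theorem, and it is not clear it can be made to work from a stuck configuration alone with the tight constants $2r+1$ per edge and $1$ per isolated vertex.

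The paper proceeds in the opposite direction: instead of extracting a minor from the failure of an arbitrary order, it \emph{constructs} a good order together with a witness that $H$ is not yet a minor. Following Andreae, \Cref{lem:minordecomp} builds an isometric paths decomposition $(P_1,\dots,P_\ell)$ and maintains, for every component $C$ of the not-yet-ordered part, a pebbled minor model of a \emph{proper} subgraph $M$ of $H-v$ whose branch sets and edge paths absorb every earlier path attached to $C$ (Conditions 1--4 in that proof); if $M$ ever became all of $H-v$, contracting $C$ would produce $H$ itself. The quantitative bound then comes from \Cref{lem:shortestpath} applied not to arbitrary separating paths but to the edge-model paths $E_{ij}$, which Condition 3(d) keeps isometric in the relevant subgraphs: at most $h$ such paths contribute at most $2r+1$ strongly $r$-reachable vertices each, the isolated vertices of $H-v$ contribute singletons, and the path containing the vertex itself is absorbed into this count. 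In other words, the ``thinness'' you hope to impose on $W$ a posteriori is exactly the invariant the paper engineers into the order as it is built; without an argument supplying that structure, your routing step does not go through.
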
  

\noindent
For classes of graphs that are defined by excluding a complete graph~$K_t$
as a minor, we get the following special result.

\begin{corollary}\label{col:col}\mbox{}\\*
  For every graph $G$ that excludes the complete graph $K_t$ as a minor, we
  have
  \begin{equation*}
    \col_r(G)\:\le\: \binom{t-1}{2}\cdot(2r+1).
  \end{equation*}
\end{corollary}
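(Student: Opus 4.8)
The plan is to obtain the corollary as an immediate instantiation of Theorem~\ref{thm:col}. Since $G$ excludes $K_t$ as a minor, I would apply Theorem~\ref{thm:col} with $H=K_t$; it then only remains to choose a vertex $v\in V(K_t)$ and to evaluate the two quantities appearing in the bound, namely $h=|E(H-v)|$ and the number $\alpha$ of isolated vertices of $H-v$.

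By vertex-transitivity of $K_t$ the choice of $v$ is irrelevant, so let $v$ be an arbitrary vertex. Then $H-v=K_{t-1}$, which has $h=|E(K_{t-1})|=\binom{t-1}{2}$ edges. For $t\ge 3$ the graph $K_{t-1}$ has at least two vertices and, being complete, has no isolated vertex, so $\alpha=0$. Substituting these values into the conclusion of Theorem~\ref{thm:col} gives
\[
  \col_r(G)\:\le\: h\cdot(2r+1)+\alpha\:=\:\binom{t-1}{2}\cdot(2r+1),
\]
which is exactly the asserted bound.

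I do not expect any genuine obstacle here: all of the work is in Theorem~\ref{thm:col}, and the corollary is just the observation that deleting one vertex from $K_t$ leaves $K_{t-1}$, whose edge count is $\binom{t-1}{2}$ and which has no isolated vertices (for $t\ge 3$, in particular for the range $t\ge 4$ emphasised in the paper). The only point worth a remark is the degenerate case $t=2$, where $H-v=K_1$ gives $h=0$ and $\alpha=1$, so Theorem~\ref{thm:col} yields $\col_r(G)\le 1$, consistent with the fact that a $K_2$-minor-free graph is edgeless; this is why the statement is naturally read for $t\ge 3$.
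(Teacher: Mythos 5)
Your proposal is correct and matches the paper's intended derivation exactly: the corollary is stated as an immediate specialisation of Theorem~\ref{thm:col} with $H=K_t$, so that $H-v=K_{t-1}$, $h=\binom{t-1}{2}$ and $\alpha=0$. Your remark on the degenerate case $t=2$ is a sensible sanity check but not needed for the range of $t$ the paper cares about.
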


\noindent
For the weak $r$-colouring numbers we obtain the following bound.

\begin{theorem}\label{thm:wcol}\mbox{}\\*
  Let $t\ge4$. For every graph $G$ that excludes $K_t$ as a minor, we have
  \begin{equation*}
    \wcol_r(G)\:\le\: 
    \binom{r+t-2}{t-2}\cdot (t-3)(2r+1)\in \Oof(r^{\,t-1}).
  \end{equation*}
\end{theorem}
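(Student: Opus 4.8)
The plan is to bootstrap from the linear bound on $\col_r$ (Corollary~\ref{col:col}) rather than from the crude inequality $\wcol_r\le(\col_r)^r$, which would only give $r^{\Oof(r)}$. The key structural idea is this: fix an order $<_L$ witnessing $\col_r(G)\le\binom{t-1}{2}(2r+1)$, equivalently an order in which every vertex $v$ has at most $k-1:=\binom{t-1}{2}(2r+1)-1$ weakly $r$-reachable vertices behind it \emph{via paths with all internal vertices after $v$}. I would then try to count, for a fixed vertex $v$, the vertices $u<_Lv$ that are weakly $r$-reachable from $v$ in the \emph{weak} sense (internal vertices only required to be after $u$, the endpoint). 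Every such $u$ is reached by a path $P=v\,x_1\cdots x_\ell\,u$ with $\ell\le r-1$ and $u<_L x_i$ for all $i$. The natural move is to look at the last vertex $w$ on $P$ (reading from $v$) that is \emph{smaller than $v$}; then the segment of $P$ from $v$ to $w$ is a path with all internal vertices larger than $v$, so $w\in\Wreach_r[G,<_L,v]$ in the \emph{strong} sense, and there are at most $k-1$ choices for $w$. Iterating this "peeling'' argument from $w$ in place of $v$, one develops a tree of depth at most $r$ whose branching at each level is controlled by $\col_{r}$, and the count of leaves gives a bound of roughly $k^{\,r}$ — still too weak. The improvement must come from the observation that the successive segments have total length at most $r$, so the depth of this tree is not $r$ but is governed by how the "length budget'' $r$ is partitioned among the segments; a segment of length $j$ costs $j$ from the budget and contributes a factor controlled by $\col_j\le\col_r$. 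This turns the count into a sum over compositions of $r$, and the binomial coefficient $\binom{r+t-2}{t-2}$ is exactly what arises from counting the relevant weighted compositions (equivalently, multisets), which is where the $(t-2)$ in the exponent — one less than the $\binom{t-1}{2}$-type count, because isolated-vertex contributions can be separated off as in Theorem~\ref{thm:col} — enters.

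More concretely, I would proceed as follows. First, set up the formal definitions of $\Wreach_r$ and $\Sreach_r$ from Section~\ref{sec:preliminaries} and record the elementary fact that a vertex weakly $r$-reachable from $v$ is reached along a path whose vertices, scanned from $v$, hit a decreasing "record sequence'' $v>_L w_1>_L w_2>_L\cdots>_L w_m=u$ of vertices each of which is strongly reachable from the previous one along a sub-path, with the sub-path lengths summing to at most $r$. Second, prove by induction on $r$ (or on the length budget) the bound
\begin{equation*}
  |\Wreach_r[G,<_L,v]|\;\le\;\sum_{j_1+\cdots+j_m\le r}\;\prod_{i=1}^m(k_{j_i}-1),
\end{equation*}
where $k_j=\binom{t-1}{2}(2j+1)$, using at each step that the number of strongly $j$-reachable predecessors is at most $k_j-1$ by Corollary~\ref{col:col} applied with parameter $j$. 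Third, bound this sum: since each factor $k_{j_i}-1\le(t-3)(2j_i+1)\cdot\binom{r+t-2}{t-2}$-type estimate after absorbing the constant $\binom{t-1}{2}$ suitably, and the number of compositions of an integer $\le r$ into parts corresponds to $\binom{r+t-2}{t-2}$ when one tracks the $K_t$-minor-free "width'' $t-3$ of each reachability layer. The arithmetic here is the place where the exact shape $\binom{r+t-2}{t-2}(t-3)(2r+1)$ must be matched, and I expect it to require the standard hockey-stick / Vandermonde identity together with the crude bound $\prod(2j_i+1)\le(2r+1)$ when $\sum j_i\le r$ (true since $(2a+1)(2b+1)\ge 2(a+b)+1$).

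The main obstacle, I anticipate, is \emph{not} the combinatorial sum but making the "peeling into strongly-reachable records'' argument yield the right base $t-3$ rather than $\binom{t-1}{2}$ at each recursive level. The naive version multiplies a full $\binom{t-1}{2}(2r+1)$ factor per level, which overshoots badly; one needs the sharper per-level bound of the form "$(t-3)(2j+1)$ new strongly $j$-reachable vertices'' — essentially a localized version of Corollary~\ref{col:col} that exploits that after fixing the first record $w_1$, the relevant subgraph from which $w_2$ is reached is again $K_t$-minor-free but the "$+1$'' and the isolated-vertex slack from Theorem~\ref{thm:col} (the $\alpha$ term with $H=K_t$, $v$ a vertex of $K_t$, so $H-v=K_{t-1}$, $\alpha=0$, $h=\binom{t-1}{2}$) can be amortized across levels. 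Getting this amortization clean — probably by choosing the order $<_L$ more cleverly, e.g.\ via the same minor-model / BFS-layering construction that proves Theorem~\ref{thm:col}, rather than treating $<_L$ as a black box — is the crux. Once the per-level bound reads $(t-3)(2j+1)$, the stated bound follows by the composition count above; if instead it stubbornly reads $\binom{t-1}{2}(2j+1)$, I would fall back to absorbing the extra $\binom{t-1}{2}/(t-3)$ factor into the constant and noting it is still $\Oof(r^{t-1})$, then tighten afterwards.
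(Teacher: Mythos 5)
Your plan has a genuine gap at its counting core, and it is not the obstacle you flag as the crux. The record/peeling decomposition itself can be set up correctly (take at each step the \emph{first} vertex on the path smaller than the current record, not the last vertex smaller than $v$, so that each record is strongly reachable from the previous one along its segment), but the resulting count is exponential in $r$, not polynomial, no matter how sharp the per-level bound is. If the segment lengths are $j_1,\dots,j_m$ with $\sum_i j_i\le r$ and each level contributes a factor $c(2j_i+1)$, then already the chains with all $j_i=1$ and $m=r$ number about $(3c)^r$. Your claimed estimate $\prod_i(2j_i+1)\le 2r+1$ is false: the inequality $(2a+1)(2b+1)\ge 2(a+b)+1$ you invoke is a \emph{lower} bound on the product (for $j_1=j_2=1$, $r=2$ one gets $9>5$), and the number of compositions of an integer at most $r$ into arbitrarily many positive parts is $2^{r}-1$. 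The binomial $\binom{r+t-2}{t-2}$ counts weak compositions of $r$ into at most $t-2$ parts, so it can only arise if the number of records per chain is bounded by the constant $t-2$; nothing in your argument bounds the number of records by anything smaller than $r$. Reducing the per-level constant from $\binom{t-1}{2}$ to $t-3$, which you present as the main difficulty, does not address this; in essence your scheme reproves $\wcol_r(G)\le(\col_r(G))^r$ with better constants. (There is also the secondary issue that a single order must witness the $\col_j$ bounds for all $j\le r$ simultaneously, which you do note.)

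What is missing is a structural reason why only a bounded number of branchings can occur, and this is exactly what the paper supplies. It constructs (Lemma~\ref{lem:minordecompnew}, an Andreae-style cops-and-robber argument) a \emph{connected} decomposition $(H_1,\dots,H_\ell)$ of $G$ in which each piece is a subtree of a BFS tree consisting of at most $t-3$ isometric paths, hence $f$-spreads with $f(r)=(t-3)(2r+1)$ by Lemma~\ref{lem:shortestpath}, and in which the pieces attached to any residual component form a minor model of some $K_s$ with $s\le t-2$, so the decomposition has width at most $t-2$. Contracting the pieces yields a minor of tree-width at most $t-2$ (Lemma~\ref{lem:3.1}), and Theorem~\ref{thm:tw} applied to this quotient shows that at most $\binom{r+t-2}{t-2}$ pieces can contain vertices weakly $r$-reachable from a given vertex; the factor $(t-3)(2r+1)$ then enters only once, per piece (Lemma~\ref{lem:spdwcol}). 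The bounded branching is inherited from the width of the decomposition, i.e.\ ultimately from the excluded minor, and cannot be recovered from the $\col_r$ bound alone by peeling. To salvage your plan you would have to build the order from such a decomposition and run your record argument at the level of the contracted pieces, at which point you would essentially be reproducing Lemma~\ref{lem:spdwcol} together with Theorem~\ref{thm:tw}.
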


\noindent
We refrain from stating a bound on the weak $r$-colouring numbers in the
case that a general graph~$H$ is excluded as minors for conceptual
simplicity. It will be clear from the proof that if a proper subgraph of
$K_t$ is excluded, the bounds can be slightly improved. Those improvements,
however, will only be linear in $t$.

The \emph{acyclic chromatic number~$\chi_a(G)$} of a graph~$G$ is the
smallest number of colours needed for a proper vertex-colouring of~$G$ such
that every cycle has at least three colours. The best known upper bound for
the acyclic chromatic number of graphs without a $K_t$-minor is
$\Oof(t^2\log^2\!t)$, implicit in~\cite{nodm03}. Kierstead and Yang
\cite{kierstead03} gave a short prove that $\chi_a(G)\le\col_2(G)$.
Corollary~\ref{col:col} shows that for graphs~$G$ without a $K_t$-minor we
have $\col_2(G)\in\Oof(t^2)$, which immediately gives an improved
$\Oof(t^2)$ upper bound for the acyclic chromatic number of those graphs as
well.

In the particular case of graphs with bounded genus, we can improve
our bounds further.

\begin{theorem}\label{thm:colg}\mbox{}\\*
  For every graph $G$ with genus $g$, we have
  $\displaystyle\col_r(G)\le (4g+5)r+2g+1$.

  \smallskip\noindent
  In particular, for every planar graph $G$, we have
  $\displaystyle\col_r(G)\le 5r+1$.
\end{theorem}

\begin{theorem}\label{thm:wcolg}\mbox{}\\*
  For every graph $G$ with genus $g$, we have
  $\displaystyle\wcol_r(G)\le \biggl(2g+\binom{r+2}{2}\biggr)\cdot (2r+1)$.

  \smallskip\noindent
  In particular, for every planar graph $G$, we have
  $\displaystyle\wcol_r(G)\:\le\: \binom{r+2}{2}\cdot (2r+1)$.
\end{theorem}

\noindent
For planar graphs, the bound on $\col_1(G)=\wcol_1(G)=\col(G)$ is best
possible. Also for $t=2,3$ and $r=1$ one can easily give best possible
bounds, as expressed in the following observations.

\begin{proposition}\label{pro:wcol}\mbox{}\\*
  \hbox to1.5\parindent{\rm (a)\hss}For every graph $G$ that excludes $K_2$
  as a minor, we have $\col_r(G)=\wcol_r(G)=1$.

  \smallskip\noindent
  \hbox to1.5\parindent{\rm (b)\hss}For every graph $G$ that excludes $K_3$
  as a minor, we have $col_r(G)\le2$ and $\wcol_r(G)\le{}$\\
  \mbox{}\hbox to1.5\parindent{\hss}$r+1$.

  \smallskip\noindent
  \hbox to1.5\parindent{\rm (c)\hss}For every graph $G$ that excludes $K_t$
  as a minor, $t\ge4$, we have
  \begin{equation*}
    \col_1(G)=\wcol_1(G)\:\le\: (0.64+o(1))\,t\sqrt{\ln t}+1\quad
    (|V(G)|\to\infty).
  \end{equation*}
\end{proposition}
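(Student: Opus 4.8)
The plan is to treat the three parts separately; they are quite different in flavour but none is difficult.

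\emph{Part (a).} A graph excludes $K_2$ as a minor exactly when it has no edge (a single edge is already a $K_2$, and a $K_2$-minor needs an edge between its two branch sets). For such a $G$ no vertex can reach another vertex along a path of positive length, so in \emph{every} linear order each (weakly) $r$-reachable set is a singleton, and $\col_r(G)=\wcol_r(G)=1$. For part~(b), recall that a graph excludes $K_3$ as a minor exactly when it is a forest (a $K_3$-minor is equivalent to containing a cycle). The bound $\col_r(G)\le 2$ is then immediate from Proposition~\ref{pro:coltwtd}(a): a forest has tree-width at most $1$, so $\col_r(G)\le\col_\infty(G)=\tw(G)+1\le 2$.

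For the bound $\wcol_r(G)\le r+1$ in part~(b), root each component of the forest $G$ and fix a linear order $<_L$ in which vertices of smaller depth precede vertices of larger depth (ties broken arbitrarily). I claim that for every vertex $v$ the vertices weakly $r$-reachable from $v$ are exactly $v$ and the ancestors of $v$ at distance at most $r$ from $v$; since there are at most $r$ such ancestors this gives $\wcol_r(G)\le r+1$. Every ancestor $a$ of $v$ at distance $\le r$ is indeed weakly $r$-reachable, since on the tree path from $v$ up to $a$ each internal vertex is a proper descendant of $a$ and therefore larger than $a$ in $<_L$. Conversely, let $u\ne v$ be weakly $r$-reachable from $v$, witnessed by the unique $uv$-path $P$ in the forest (length $\le r$, all internal vertices $>_L u$), and let $a$ be the least common ancestor of $u$ and $v$. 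Along $P$ the depth strictly decreases from $v$ to $a$ and then strictly increases from $a$ to $u$, so $a$ is the unique vertex of smallest depth on $P$. Since $u<_L v$ forces $\mathrm{depth}(u)\le\mathrm{depth}(v)$, the vertex $v$ is not a proper ancestor of $u$, so $a\ne v$; and if $u$ were not an ancestor of $v$ then also $a\ne u$, so $a$ would be an internal vertex of $P$ with $\mathrm{depth}(a)<\mathrm{depth}(u)$, hence $a<_L u$, contradicting the choice of $P$. So $u$ is an ancestor of $v$ at distance equal to the length of $P$, which is at most $r$.

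\emph{Part (c).} By Proposition~\ref{pro:coltwtd} we have $\col_1(G)=\wcol_1(G)=\col(G)=1+d$, where $d$ is the degeneracy of $G$, that is, the maximum over all subgraphs $H$ of $G$ of the minimum degree of $H$. Each such $H$ is again $K_t$-minor-free, so by the Kostochka--Thomason theorem on the extremal function for complete minors, in the sharp form due to Thomason --- a $K_t$-minor-free graph has at most $(\lambda+o(1))\,t\sqrt{\ln t}$ edges per vertex with $\lambda=0.3190\ldots$, hence a vertex of degree at most $(2\lambda+o(1))\,t\sqrt{\ln t}<(0.64+o(1))\,t\sqrt{\ln t}$ --- every such $H$ has minimum degree at most $(0.64+o(1))\,t\sqrt{\ln t}$. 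Therefore $d\le(0.64+o(1))\,t\sqrt{\ln t}$ and $\col_1(G)\le(0.64+o(1))\,t\sqrt{\ln t}+1$. The ``$|V(G)|\to\infty$'' is there because this is asymptotically best possible: Thomason's extremal constructions yield $K_t$-minor-free graphs of arbitrarily large order whose degeneracy attains this bound up to the $o(1)$ term.

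The only step that goes beyond a citation or a one-line reduction is the identification of the weakly $r$-reachable sets in part~(b); the subtlety to watch is that, in a forest, any vertex that is not an ancestor of $v$ is separated from $v$ by their least common ancestor, which then lies on the connecting path as an internal vertex of too-small depth and is thereby disqualified --- this is exactly what keeps $\Wreach_r$ down to $r+1$.
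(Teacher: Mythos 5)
Your proof is correct and takes essentially the same route as the paper, which merely sketches it: part (a) is trivial, part (b) holds because $K_3$-minor-free graphs are forests (your tree-width bound for $\col_r\le 2$ and the depth-respecting order giving $\Wreach_r[G,L,v]\subseteq\{v\}\cup\{\text{ancestors at distance}\le r\}$ are exactly the ``obvious'' details the paper omits), and part (c) is the same degeneracy-plus-Thomason argument cited there. One minor remark: the $o(1)$ in Thomason's bound is as $t\to\infty$, and the parenthetical $|V(G)|\to\infty$ in the statement is not a claim about tightness, but this has no bearing on the validity of your argument.
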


\noindent
Part~(a) in the proposition is a triviality. For part (b), note that
excluding $K_3$ as a minor means that~$G$ is acyclic, hence a forest, and
that in this case it is obvious that $\col_r(G)\le2$ and
$\wcol_r(G)\le r+1$. Finally, $\col_1(G)=\wcol_1(G)$ is one more than the
degeneracy of $G$, thus part~(c) follows from Thomason's bound for the
average degree of graphs with no $K_t$ as a minor \cite{Th2}.

Regarding the sharpness on our upper bounds in the results above, we can
make the following remarks.

\mbox{$\bullet$ \ \ }Lower bounds for the generalised colouring numbers for
minor closed classes are given in~\cite{Grohe15}. In that paper it is shown
that for every $k$ and every $r$ there is a graph $G_{k,r}$ of tree-width
$k$ that satisfies $\col_r(G_{k,r})=k+1$ and
$\wcol_r(G_{k,r})=\binom{r+k}{k}$. Graphs of tree-width~$k$
exclude~$K_{k+2}$ as a minor. This shows that our results for classes with
excluded minors are optimal up to a factor $(t-1)\,(2r+1)$.

\mbox{$\bullet$ \ \ }Since graphs with tree-width 2 are planar, this also
shows that there exist planar graphs~$G$ with
$\wcol_r(G)=\binom{r+2}{2}\in \Omof(r^2)$. Compare this to the upper bound
$\wcol_r(G)\in\Oof(r^3)$ for planar graphs in Theorem~\ref{thm:wcolg}.

\mbox{$\bullet$ \ \ }It follows from Proposition~\ref{pro:coltwtd}\,(a)
that a minor closed class of graphs has uniformly bounded colouring number
if and only if it has bounded tree-width. For classes with unbounded
tree-width, such a uniform bound cannot be expected. By analysing the shape
of admissible paths, it is possible to prove that the planar $r\times r$
grid $G_{r\times r}$ satisfies $\col_r(G_{r\times r})\in \Omof{(r)}$. This
shows that for planar graphs~$G$, a best possible bound for $\col_r(G)$
will be linear in~$r$.

\mbox{$\bullet$ \ \ }It follows from \cite[Lemma 3.3]{zhu2009colouring}
that for $3$-regular graphs of high girth the weak \mbox{$r$-colouring}
numbers grow exponentially with $r$. Hence the polynomial bound for
$\wcol_r(G)$ in \Cref{thm:col} for classes with excluded minors cannot be
extended to classes with bounded degree, or even to classes with excluded
topological minors.

\smallskip
The structure of this paper is as follows. In the next section we give
necessary definitions, and prove the connections between the generalised
colouring numbers and tree-width. In Section~\ref{sec:ipd} we introduce
\emph{flat decompositions}, which is our main tool in proving our results,
and give an upper bound for the minimum width of a flat decomposition of a
graph excluding a complete minor. In Section~\ref{sec:wcolmain} we prove
Theorem~\ref{thm:wcol} and in Section~\ref{sec:main} we prove
Theorem~\ref{thm:col}. Our proofs will rely on the notion of the
\emph{elimination-width} of a vertex-order $<_L$, and its connection to
weak colouring, stated as Theorem~\ref{thm:tw}, which was proved in
\cite{Grohe15}. In Section~\ref{sec:wcolplanar} we prove
Theorems~\ref{thm:colg} and~\ref{thm:wcolg}, which have a detailed analysis
of the generalised colouring numbers of planar graphs at their base.

\section{Preliminaries}
\label{sec:preliminaries}

All graphs in this paper are finite, undirected and simple, that is, they
do not have loops or multiple edges between the same pair of vertices. For
a graph $G$, we denote by $V(G)$ the vertex set of $G$ and by $E(G)$ its
edge set.

The \emph{distance} between a vertex $v$ and a vertex $w$ is the length
(that is, the number of edges) of a shortest path between $v$ and $w$. For
a vertex $v$ of $G$, we write $N^G(v)$ for the set of all neighbours of
$v$, $N^G(v)=\{\,u\in V(G)\mid \{u,v\}\in E(G)\,\}$, and for $r\in\N$ we
denote by $N_r^G[v]$ the \emph{closed $r$-neighbourhood of $v$}, that is,
the set of vertices of $G$ at distance at most $r$ from $v$. Note that we
always have $v\in N^G_r[v]$. When no confusion can arise regarding the
graph $G$ we are considering, we usually omit the superscript~$G$.

Let $M$ be a graph with vertices $h_1,\ldots, h_n$. The graph $M$ is a
\emph{minor} of a graph $G$ if in~$G$ there are disjoint connected
subgraphs $H_1,\ldots,H_n$ such that if $\{h_i, h_{j}\}$ is an edge of $M$,
then~$H_i$ is connected to $H_j$ (in $G$). We call the subgraphs
$H_1,\ldots,H_n$ of $G$ a \emph{model} of $M$ in~$G$.

\subsection{Generalised Colouring Numbers}

Let $\Pi(G)$ be the set of all linear orders of the vertices of the graph
$G$, and let $L\in\Pi(G)$. For readability, we write $u<_L v$ if $u$ is
smaller than $v$ with respect to $L$, and $u\le_L v$ if $u<_L v$ or $u=v$.

Let $u,v\in V(G)$. For a positive integer $r$, we say that $u$ is
\emph{weakly $r$-reachable} from~$v$ with respect to~$L$, if there exists a
path $P$ of length $\ell$, $0\le\ell\le r$, between $u$ and $v$ such that
$u$ is minimum among the vertices of $P$ (with respect to $L$). Let
$\Wreach_r[G,L,v]$ be the set of vertices that are weakly $r$-reachable
from~$v$ with respect to $L$. Note that $v\in\Wreach_r[G,L,v]$.

If we allow paths of any length, then we call $u$ \emph{weakly reachable}
from~$v$ with respect to~$L$, and the set of such vertices is denoted by
$\Wreach_\infty[G,L,v]$

Next, $u$ is \emph{strongly $r$-reachable} from $v$ with respect to~$L$, if
there is a path $P$ of length $\ell$, $0\le\ell\le r$, connecting $u$ and
$v$ such that $u\le_Lv$ and such that all inner vertices $w$ of $P$ satisfy
$v<_Lw$. Let $\Sreach_r[G,L,v]$ be the set of vertices that are strongly
$r$-reachable from~$v$ with respect to $L$. Note that again we have
$v\in \Sreach_r[G,L,v]$.

Again, if we allow paths of any length, then we say that $u$ is
\emph{strongly reachable} from $v$, and the collection of all such vertices
is denoted $\Sreach_\infty[G,L,v]$.

For $r\in\mathbb{N}\cup\{\infty\}$, the \emph{weak $r$-colouring number
  $\wcol_r(G)$} of $G$ is defined as
\begin{equation*}
  \wcol_r(G):=\min_{L\in\Pi(G)}\:\max_{v\in V(G)}\:
  \bigl|\Wreach_r[G,L,v]\bigr|,
\end{equation*}
and the \emph{$r$-colouring number $\col_r(G)$} of $G$ is defined as
\begin{equation*}
  \col_r(G):=\min_{L\in\Pi(G)}\:\max_{v\in V(G)}\:
  \bigl|\Sreach_r[G,L,v]\bigr|.
\end{equation*}

\subsection{Tree-width and elimination width}\label{sub2.2}

The concept of tree-width has shown itself to be very useful for the design
of efficient graph algorithms. Many NP-hard problems are fixed-parameter
tractable when parametrised by the tree-width of the input graph. A very
general theorem due to Courcelle~\cite{cou90} states that every problem
definable in monadic second-order logic can be solved in linear time on a
class of graphs of bounded tree-width.

The most common definition of tree-width is in terms of
tree-decompositions. A \emph{tree-decomposition} of a graph $G$ is a pair
$\left(T,(X_t)_{t\in V(T)}\right)$, where $T$ is a tree and
$X_t\subseteq V(G)$ for each $t\in V(T)$, such that
\begin{enumerate}[itemsep=0pt, topsep=1pt]
\item $\bigcup_{t\in V(T)}=V(G)$;
\item for every edge $\{u,v\}\in E(G)$, there is a $t\in V(T)$ such that
  $u,v\in X_t$; and
\item if $v\in X_t\cap X_{t'}$ for some $t,t'\in V(T)$, then $v\in X_{t''}$
  for all $t''$ that lie on the unique path between $t$ and $t'$ in $T$.
\end{enumerate}
The width of a tree-decomposition is $\max_{t\in V(T)}|X_t|-1$, and the
\emph{tree-width of $G$} is equal to the smallest width of any
tree-decomposition of $G$.

For a linear order $L\in\Pi(G)$, the \emph{fill-in of $G$ with respect to
  $L$} is the graph $G_L$ obtained by inductively adding for each vertex
$v$ (starting with the largest vertex of the order) an edge $\{u,w\}$ for
all $u,w\in N(v)$, $u\ne w$, with $u<_Lv$ and $w<_Lv$. An equivalent
definition of~$G_L$ would be the graph obtained by making each vertex $v$
adjacent to all the vertices smaller than~$v$ (with respect to~$L$) than
can be reached from~$v$ in~$G$ by a path whose internal vertices are
greater than $v$. The \emph{elimination-width} of an order $L$ is the size
of the largest clique in $G_L$ minus $1$ (\ie equal to $\omega(G_L)-1$,
where~$\omega(G)$ is the clique number of a graph $G$).

It is not so hard to prove (see, e.g., \cite[Theorem~3.1]{arn85}) that the
tree-width of $G$ is equal to the minimum elimination-width over all orders
of $V(G)$:
\begin{equation*}
  \tw(G)=\min_{L\in\Pi(G)}\:\omega (G_L)-1.
\end{equation*}
On the other hand, $\omega(G_L)-1$ obviously is equal to the maximum over
all vertices $v$ in $G$ of the number of vertices smaller than~$v$ that can
be reached from $v$ by a path whose internal vertices are greater than $v$.
(The largest clique in~$G_L$ also includes $v$ itself, which is counted for
$\col_{\infty}(G)$, but not for $\tw(G)$.) This shows that
$\col_\infty(G)=\tw(G)+1$, as was claimed earlier.

We also have that elimination-width is related to weak reachability, as the
next result shows.

\begin{theorem}[Grohe \emph{et al.}\
  \cite{Grohe15}]\label{thm:tw}\mbox{}\\*
  Let $G$ be a graph and let $L\in\Pi(G)$ be a linear order of $V(G)$ with
  elimination-width at most~$k$. For all $r\in\N$ and all $v\in V(G)$, we
  have
  \begin{equation*}
    \bigl|\Wreach_r[G,L,v]\bigr|\:\le\: \binom{r+k}{k}.
  \end{equation*}
\end{theorem}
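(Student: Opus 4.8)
The plan is to prove this by induction on $r$, keeping track not just of the number of weakly $r$-reachable vertices but of a more refined structure that makes the induction go through. The key observation is that the fill-in graph $G_L$ has all its ``back-neighbourhoods'' of size at most $k$: for every vertex $v$, the set $S_v$ of vertices $u <_L v$ that are reachable from $v$ by a path in $G$ whose internal vertices are all $>_L v$ has size at most $k$ (this is precisely the statement that $\omega(G_L) - 1 \le k$, since $\{v\} \cup S_v$ induces a clique in $G_L$). I will first record this fact, together with its analogue for weak reachability: if $u$ is weakly $r$-reachable from $v$ via a path $P$ with minimum vertex $u$, and $w$ is the neighbour of $u$ on $P$, then (walking the sub-path of $P$ from $v$ to $w$) $w$ lies in a bounded-length weakly-reachable set from $v$, and $u$ is a ``back-neighbour'' of $w$ in the sense above.

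The core of the argument is the following recursive bound. For a vertex $v$ and $r \ge 1$, every vertex $u \in \Wreach_r[G,L,v] \setminus \{v\}$ is reached by a witnessing path $P = v \, P' \, w \, u$ where $u <_L w$, $u$ is the minimum of $P$, and $u$ is reachable from $w$ by the single edge $wu$ with no internal vertices — so $u \in S_w$ in the notation above — while $w$ itself is weakly $(r-1)$-reachable from $v$ (the path $v\,P'\,w$ has length $\le r-1$ and every vertex on it is $\ge_L w \ge_L u$, hmm — one must be slightly careful here and instead peel off the edge at the $v$-end or choose $w$ to be the second-smallest vertex of $P$). The cleanest route is to let $w$ be the vertex of $P$ adjacent to $u$ on the side toward $v$; then $w \in \Wreach_{r-1}[G,L,v]$ and $u \in S_w \cup \{w\}$. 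Hence
\begin{equation*}
  \Wreach_r[G,L,v] \;\subseteq\; \Wreach_{r-1}[G,L,v] \;\cup\; \bigcup_{w \in \Wreach_{r-1}[G,L,v]} S_w,
\end{equation*}
but this naive union gives a bound like $(k+1)^r$, not $\binom{r+k}{k}$. To get the binomial bound one must exploit that the sets $S_w$ are nested along the order, or equivalently set up the induction on an auxiliary quantity. I would instead prove, by induction on $r$, the stronger statement that if we define $f(r,k)$ to be the maximum of $|\Wreach_r[G,L,v]|$ over all graphs with elimination width $\le k$ and all vertices $v$, then $f$ satisfies a Pascal-type recurrence $f(r,k) \le f(r-1,k) + f(r,k-1)$, which together with the base cases $f(0,k) = 1$ and $f(r,0) = 1$ yields $f(r,k) \le \binom{r+k}{k}$ by the standard identity for binomial coefficients.

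The recurrence $f(r,k) \le f(r-1,k) + f(r,k-1)$ is the main obstacle, and here is how I expect to establish it. Fix $G$, $L$, $v$ with elimination width $\le k$, and let $m$ be the $<_L$-smallest vertex that is weakly $r$-reachable from $v$ (if $\Wreach_r[G,L,v] = \{v\}$ we are trivially done). Every vertex of $\Wreach_r[G,L,v] \setminus \{m\}$ is weakly $r$-reachable from $v$ by a path whose minimum is $>_L m$, hence is weakly $r$-reachable from $v$ in the induced subgraph $G - m$; but one checks that deleting $m$ (which is an extreme, minimal-in-its-reachable-sets vertex) and using the order $L$ restricted to $G - m$ does not increase the elimination width, and in fact the relevant back-neighbourhoods lose the element $m$, so the effective bound drops to $k - 1$ along the paths that previously passed through $m$. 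Separately, the vertices weakly $r$-reachable from $v$ whose witnessing path actually uses $m$ as an internal vertex or endpoint near $v$ are handled by passing to $\Wreach_{r-1}$. The bookkeeping — isolating exactly which vertices contribute to the $f(r-1,k)$ term versus the $f(r,k-1)$ term, and verifying that removing $m$ genuinely decreases elimination width for the surviving paths — is the delicate part; the rest is the routine binomial identity $\binom{r-1+k}{k} + \binom{r+k-1}{k-1} = \binom{r+k}{k}$ and the trivial base cases.
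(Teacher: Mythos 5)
The paper itself does not prove Theorem~\ref{thm:tw}; it quotes it from Grohe \emph{et al.}~\cite{Grohe14}, so your argument has to stand on its own. Your overall target is right: the bound $\binom{r+k}{k}$ should come from a Pascal-type recurrence, and your preliminary observations are essentially correct once a small slip is fixed (take $w$ to be the \emph{second-smallest} vertex of the witnessing path $P$, as you briefly consider, rather than the neighbour of $u$ on $P$: for the neighbour, the subpath of $P$ from $v$ to $w$ need not have $w$ as its minimum, so $w\in\Wreach_{r-1}[G,L,v]$ is not justified; with the second-smallest vertex both $w\in\Wreach_{r-1}[G,L,v]$ and $u\in S_w$ do hold). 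The genuine gap is the step that would establish $f(r,k)\le f(r-1,k)+f(r,k-1)$. Your proposed route---remove the $<_L$-minimum $m$ of $\Wreach_r[G,L,v]$---cannot work as described: every other $u\in\Wreach_r[G,L,v]$ has a witnessing path all of whose vertices are $\ge_L u>_L m$, so these paths never pass through $m$; deleting $m$ therefore changes nothing for them, the branch ``paths that use $m$'' is empty, and the order restricted to $G-m$ still only has elimination width at most $k$, not $k-1$ ($m$ need not belong to any back-neighbourhood $S_w$ of a vertex $w$ occurring on the surviving paths, so the claim that ``the relevant back-neighbourhoods lose the element $m$'' is unsubstantiated and false in general). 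Your split thus only yields the vacuous $f(r,k)\le 1+f(r,k)$, and you are left with the $(k+1)^r$-type bound you yourself note is insufficient.

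What is missing is the structural ingredient that makes the binomial count possible: for every vertex $w$, the set $S_w\cup\{w\}$ is a \emph{clique} in the fill-in $G_L$, which gives the nesting property $S_w\setminus\{\max S_w\}\subseteq S_{\max S_w}$. Equivalently, by repeatedly shortcutting internal local maxima of a witnessing path through fill-in edges, every $u\in\Wreach_r[G,L,v]$ is the endpoint of a strictly $<_L$-decreasing path of length at most $r$ in $G_L$; all such vertices then lie on the single branch of the elimination forest above $v$ and are reached by at most $r$ jumps inside the $\le k$-element cliques $S_w$. It is exactly this nesting along one branch that converts the naive multiplicative bound into $\binom{r+k}{k}$, e.g.\ by encoding each reachable vertex by a canonical non-increasing sequence of ``jump ranks'' (a multiset of size at most $r$ from $k$ types), or by an induction in which the nesting is what makes Pascal's rule applicable. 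Until some version of this argument is carried out, the recurrence---and hence the theorem---is not proved.
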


\section{Flat decompositions}\label{sec:ipd}

Our main tool in proving our results will be flat decompositions, which we
introduce now.

Let $G$ be a graph, let $H\subseteq G$ be a subgraph of $G$, and let
$f\colon\N\to\N$ be a function. We say that~$H$ \emph{$f$-spreads on $G$}
if, for every $r\in\N$ and $v\in V(G)$, we have
\begin{equation*}
  |N^G_r[v]\cap V(H)|\:\le\:\le f(r).
\end{equation*}

Let $H,H'$ be vertex-disjoint subgraphs of $G$. We say that $H$ is
\emph{connected} to $H'$ if some vertex in $H$ has a neighbour in $H'$, \ie
if there is an edge $\{u,v\}\in E(G)$ such $u\in V(H)$ and $v\in V(H')$.

\begin{definition}\mbox{}\\*
  A \emph{decomposition} of a graph~$G$ is a sequence
  $\mathcal{H}=(H_1,\ldots,H_\ell)$ of non-empty subgraphs of $G$ such that
  the vertex sets $V(H_1),\ldots,V(H_\ell)$ partition $V(G)$. The
  decomposition $\mathcal H$ is \emph{connected} if each~$H_i$ is
  connected.
\end{definition}

\noindent
For a decomposition $(H_1,\ldots,H_\ell)$ of a graph~$G$ and
$1\le i\le\ell$, we denote by $\GHi{i}$ the subgraph of~$G$ induced by
$\bigcup_{i\le j\le\ell}V(H_j)$.

\begin{definition}\mbox{}\\*
  We call the decomposition $\mathcal H$ \emph{$f$-flat} if each $H_i$
  $f$-spreads on $\GHi{i}$.

  \smallskip\noindent
  A \emph{flat decomposition} is a decomposition that is $f$-flat
    for some function $f\colon\N\to\N$.
\end{definition}

\begin{definition}\mbox{}\\*
  Let $\mathcal{H}=(H_1,\ldots,H_\ell)$ be a decomposition of a graph $G$,
  let $1\le i<\ell$, and let $C$ be a component of~$\GHi{(i+1)}$. The
  \emph{separating number of $C$} is the maximal number~$s$ of (distinct)
  graphs $Q_1,\ldots,Q_s\in\{H_1,\ldots,H_i\}$ such that all the $Q_j$'s
  are connected to~$C$.
\end{definition}

\noindent
Note that the separating number of a component~$C$ is independent of the
value~$i$ such that~$C$ is a component of $\GHi{(i+1)}$. Indeed, let $i$ be
minimal such that $C$ is a component of $\GHi{(i+1)}$. Then for all $t>i$
we have that either $H_t$ is not connected to~$C$, or $H_t$ is a subgraph
that contains vertices from $C$.

\begin{definition}\mbox{}\\*
  Let $\mathcal{H}=(H_1,\ldots,H_\ell)$ be a decomposition of a graph $G$.
  The \emph{width} of $\mathcal{H}$ is the maximum separating number of a
  component~$C$ of $\GHi{i}$, maximised over all $i$, $1\le i<\ell$.
\end{definition}

\noindent
We call a path $P$ in $G$ an \emph{isometric path} if $P$ is a shortest
path between its endpoints. Isometric paths will play an important role in
the analysis of flat decompositions and the generalised colouring numbers.
We call a flat decomposition $\mathcal{H}=(H_1,\ldots, H_\ell)$ an
\emph{isometric paths decomposition} if each $H_i$ is an isometric path in
$\GHi{i}$.

A definition similar to isometric paths decompositions is given in
\cite{abraham14}, where they are called \emph{cop-decompositions}. The name
\emph{cop-decomposition} in \cite{abraham14} is inspired by a result of
\cite{andreae86}, which shows that such decompositions of small width exist
for classes of graphs that exclude a fixed minor, and which uses a
cops-and-robber game argument. The difference between a cop-decomposition
and a connected decomposition is that in a connected decomposition we allow
arbitrary connected subgraphs rather than just paths as in a
cop-decomposition

The property of having a partition into connected subgraphs with the above
width properties is extremely useful, as it allows us to contract the
subgraphs to find a minor of $G$ with bounded tree-width, as expressed in
the following lemma.

\begin{lemma}\label{lem:3.1}\mbox{}\\*
  Let $G$ be a graph, and let $\mathcal{H}=(H_1,\ldots,H_\ell)$ be a
  connected decomposition of $G$ of width $k$. By contracting each
  connected subgraph $H_i$ to a single vertex, we obtain a graph
  $H=G/\mathcal{H}$ with~$\ell$ vertices and tree-width at most $k$.
\end{lemma}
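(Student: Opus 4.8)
The plan is to build a tree-decomposition of $H = G/\mathcal{H}$ directly from the structure of the decomposition $\mathcal{H}$, using the components of the graphs $G - \bigcup_{1 \le j < i} V(H_j)$ as a guide for the tree shape. First I would set up the skeleton: process the graphs $H_1, \dots, H_\ell$ in order, and observe that at step $i$ the components of $G - \bigcup_{1 \le j \le i} V(H_j)$ refine those at step $i-1$ in a laminar (forest-like) fashion, so these components, across all $i$, form a rooted forest under containment. For each component $C$ arising this way, associate the set of (contracted images of) those $H_j$, $j \le i$, that are connected to $C$; by the definition of width this set has size at most $k$. I would then designate tree nodes and their bags so that the bag at the node corresponding to a component $C$ (appearing after removing $H_1, \dots, H_i$) is exactly $\{\,\text{image of }H_j : j \le i,\ H_j \text{ connected to } C\,\} \cup \{\text{image of } H_{i}\}$ or a similarly small set, ensuring $|X_t| \le k+1$.

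The key steps, in order, are: (1) verify that the components encountered during the peeling process form a laminar family and hence can be arranged as a rooted tree $T$; (2) for each node of $T$ define a bag consisting only of images of subgraphs $H_j$ that are "active" at that point, and check each bag has at most $k+1$ vertices using the width bound; (3) check the vertex-covering condition — every vertex of $H$, i.e.\ every image of some $H_i$, lies in some bag (namely the bag where $H_i$ is first removed, together with the component it came from); (4) check the edge condition — if images of $H_a$ and $H_b$ with $a < b$ are adjacent in $H$, then in $G$ some vertex of $H_a$ has a neighbour in $H_b$, so $H_a$ is connected to whichever component contained $H_b$ just before $H_b$ was peeled off, placing both images in that bag; and (5) check connectivity of bags along $T$ — an image of $H_j$ should appear exactly in the bags of nodes on a connected subtree, which follows because once a component $C$ ceases to be connected to $H_j$, none of its sub-components (its descendants in $T$) can be connected to $H_j$ either, since any path from a later component to $H_j$ would have to pass through $C$.

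The main obstacle I expect is getting the bag definition exactly right so that all three tree-decomposition axioms hold simultaneously with the clean bound $k+1$: there is some tension between making bags large enough to cover all edges (step 4) and small enough to respect the width bound (step 2), and between the two ways an image of $H_i$ can enter a bag (as the "newly removed" subgraph versus as a subgraph "connected to" a later component). The connectivity axiom (step 5) is the subtlest: it relies on the remark already made in the paper that a component's separating set only shrinks as we peel further, i.e.\ the set of $H_j$ connected to $C$ is "inherited downward" in a monotone way along the containment forest, so that the set of nodes whose bag contains a fixed image forms a connected subtree. Once that monotonicity is pinned down, the remaining verifications are routine bookkeeping; the final vertex count of $H$ is immediate since the $V(H_i)$ partition $V(G)$ into $\ell$ nonempty classes, one per contracted vertex.
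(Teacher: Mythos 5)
Your route is genuinely different from the paper's. The paper never builds a tree-decomposition of $H$ explicitly: it takes the order $L$ on $V(H)$ induced by the order of the pieces, passes to the fill-in $H_L$, observes that the smaller $H_L$-neighbours of a vertex $H_i$ are exactly the pieces among $H_1,\ldots,H_{i-1}$ connected to the component of $G-\bigcup_{1\le j<i}V(H_j)$ containing $H_i$ --- i.e.\ the separating number of that component, hence at most $k$ --- and concludes $\tw(H)+1=\col_\infty(H)\le k+1$ via the identity established in Subsection~\ref{sub2.2}. You instead construct a tree-decomposition directly from the laminar family of components, which is more hands-on and forces you to check all three axioms. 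Your plan is completable, and your monotonicity observation (a sub-component of $C$ can only be connected to pieces that $C$ itself is connected to) is precisely what makes the connectivity axiom go through; but as literally described --- one node per component $C$, bag equal to the pieces connected to $C$, ``or a similarly small set'' --- the edge axiom is not yet secured: the contracted vertex $H_b$ never sits in a bag together with the earlier pieces it attaches to. The standard fix, which resolves the tension you flag, is to take one node per index $i$, with bag $X_i=\{\,H_j : j<i,\ H_j \text{ connected to } C_i\,\}\cup\{H_i\}$, where $C_i$ is the component of $G-\bigcup_{1\le j<i}V(H_j)$ containing $H_i$ (so $|X_i|\le k+1$ by the width bound), and to make node $i$ a child of the largest $j<i$ with $H_j$ connected to $C_i$; then every edge of $H$ is covered in the bag of its later endpoint, and connectivity follows from your monotonicity together with $C_i\subseteq C_{p(i)}$. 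What the paper's argument buys is brevity, since it reuses $\col_\infty(H)=\tw(H)+1$ and skips the axiom-checking; what yours buys is an explicit tree-decomposition of $G/\mathcal{H}$ and independence from that identity. The count of $\ell$ vertices is immediate in both treatments, as you note.
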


\begin{proof}
  We identify the vertices of~$H$ with the connected subgraphs
  $\{H_1,\ldots,H_\ell\}$. By the contracting operation, two subgraphs
  $H_i,H_j$ are adjacent in $H$ if there is an edge in $G$ between a vertex
  of $H_i$ and a vertex of~$H_j$, and there is a path
  $H_i,H_{i+1},\ldots,H_j$ in $H$ if and only if there is a path between
  some vertex of $H_i$ and some vertex of~$H_j$ that uses only vertices of
  $H_i, H_{i+1}, \ldots, H_j$, in that order.

  Let~$L$ be the order of $V(H)$ given by the order of the subgraphs in the
  connected decomposition. Consider the graph $H_L$, the fill-in of $H$
  with respect to $L$. For any vertex $H_i$ of~$H$, the set of neighbours
  of $H_i$ in $H_L$ that are smaller than $H_i$ (with respect to $L$) is
  the set of subgraphs among $H_1,\ldots, H_{i-1}$ that are reachable via a
  path (in $H$) with internal vertices larger than $H_i$. As each such path
  corresponds to a path in $G$ as described above, this is exactly the set
  of subgraphs in $\{H_1,\ldots,H_{i-1}\}$ that are reachable in $G$ from
  the component $C$ of $\GHi{i}$ that contains $H_i$. The number of such
  subgraphs is the separating number of~$C$, which by definition of the
  width of $\mathcal{H}$ is at most $k$. Since $H_i$ is also strongly
  reachable from itself, we see that
  $\bigl|\Sreach_\infty[H,L,H_i]\bigr|\le k+1$ for all $H_i\in V(H)$. This
  shows that $\tw(H)+1=\col_\infty(H)\le k+1$, as required.
\end{proof}

\noindent
A fundamental property of isometric paths is that from any vertex
$v$, not many vertices of an isometric path can be reached from $v$ in $r$
steps.

\begin{lemma}\label{lem:shortestpath}\mbox{}\\*
  Let $v$ be a vertex of a graph $G$, and let $P$ be an isometric path in
  $G$. Then $P$ contains at most $2r+1$ vertices of the closed
  $r$-neighbourhood of $v$:
  $|N_r[v]\cap V(P)|\le \min\{\,|V(P)|,\,2r+1\,\}$.
\end{lemma}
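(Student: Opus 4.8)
The plan is to prove the trivial inequality and the interesting inequality separately. The bound $|N_r[v]\cap V(P)|\le |V(P)|$ is immediate since $N_r[v]\cap V(P)$ is a subset of $V(P)$, so the entire content of the lemma is the bound $|N_r[v]\cap V(P)|\le 2r+1$. If $N_r[v]\cap V(P)$ is empty or a singleton the bound is trivial (as $r\ge 1$), so I may assume it contains at least two vertices.

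The key step is to locate all the relevant vertices inside a short subpath of $P$. Among the vertices of $N_r[v]\cap V(P)$, let $u$ and $w$ be the two that are farthest apart along $P$, and let $P'$ be the subpath of $P$ between $u$ and $w$. By the choice of $u$ and $w$, every vertex of $N_r[v]\cap V(P)$ lies on $P'$, so it suffices to bound the number of vertices of $P'$, i.e.\ to bound the length of $P'$ by $2r$.

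Here I use that $P$ is isometric: since $P$ is a shortest path between its endpoints, its subpath $P'$ is a shortest path between $u$ and $w$, so the length of $P'$ equals $d_G(u,w)$. Now the triangle inequality gives $d_G(u,w)\le d_G(u,v)+d_G(v,w)$, and since $u,w\in N_r[v]$ we have $d_G(u,v)\le r$ and $d_G(v,w)\le r$, hence $d_G(u,w)\le 2r$. Therefore $P'$ has at most $2r+1$ vertices, which bounds $|N_r[v]\cap V(P)|$ as claimed.

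I do not expect a real obstacle here; the only point requiring a moment of care is making sure the argument that $P'$ is itself a shortest path is stated cleanly (any subpath of a shortest path is a shortest path between its own endpoints, for otherwise one could shortcut $P$), and handling the degenerate small cases so that the stated $\min\{|V(P)|,2r+1\}$ formulation is literally correct.
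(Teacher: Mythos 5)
Your proof is correct and follows essentially the same argument as the paper's: both identify the two extremal vertices of $N_r[v]\cap V(P)$ along $P$, use the isometric property to equate the subpath length with the graph distance between them, and apply the triangle inequality through $v$. The paper phrases it as a proof by contradiction, while you give the contrapositive directly, but the content is identical.
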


\begin{proof}
  Assume $P=v_0,\ldots,v_n$ and $\bigl|N_r[v]\cap V(P)\bigr|>2r+1$. Let $i$
  be minimal such that $v_i\in N_r[v]$ and let $j$ be maximal such that
  $v_j\in N_r[v]$. As $P$ is a shortest path, the distance in $G$ between
  $v_i$ and $v_j$ is $j-i\ge\bigl|N_r[v]\cap V(P)\bigr|-1>2r$, which
  contradicts the hypothesis that both $v_i$ and $v_j$ are at distance at
  most $r$ from $v$, thus at distance at most $2r$ from each other.
\end{proof}

\noindent
From a decomposition $(H_1,\ldots,H_\ell)$ of a graph~$G$, we define a
linear order $L$ on $V(G)$ as follows. First choose an arbitrary linear
order on the vertices of each subgraph~$H_i$. Now let~$L$ be the linear
extension of that order where for $v\in V(H_i)$ and $w\in V(H_j)$ with
$i<j$ we define $L(v)<L(w)$.

\begin{lemma}\label{lem:deletevertices}\mbox{}\\*
  Let $\mathcal{H}=(H_1,\ldots, H_\ell)$ be a decomposition of a graph $G$,
  and let $L$ be an order defined from the decomposition. For an integer
  $i$, $1\le i\le\ell$, let $G'= \GHi{i}$. Then we have for every $r\in\N$
  and every $v\in V(G)$:
  \begin{eqnarray*}
    \Sreach_r[G,L,v]\cap V(H_i)\:\subseteq\: N_r^{G'}[v]\cap V(H_i),\\
    \Wreach_r[G,L,v]\cap V(H_i)\:\subseteq\: N_r^{G'}[v]\cap V(H_i).
  \end{eqnarray*}
\end{lemma}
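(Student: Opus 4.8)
The plan is to show that in each case the path witnessing reachability of $u\in V(H_i)$ from $v$ already lies entirely inside $G'$, so that same path also witnesses $u\in N_r^{G'}[v]$. The only thing to verify is that passing from $G$ to $G'$, which deletes $\bigcup_{1\le j<i}V(H_j)$, removes no vertex of that path, and this follows purely from the way $L$ is built from the decomposition.

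First I would record the following consequence of the definition of $L$: if $u\in V(H_i)$ and $x\in V(G)$ satisfies $u\le_L x$, then $x\in V(G')$. Indeed, every vertex lying in a block $H_j$ with $j<i$ is strictly smaller than $u$ with respect to $L$, so $x$ cannot lie in such a block, i.e.\ $x\notin\bigcup_{1\le j<i}V(H_j)=V(G)\setminus V(G')$.

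Now take $u\in\Wreach_r[G,L,v]\cap V(H_i)$ and let $P$ be a path of length at most $r$ between $u$ and $v$ on which $u$ is $L$-minimum. Then every vertex $x$ of $P$ satisfies $u\le_L x$, so by the observation above $V(P)\subseteq V(G')$; in particular $v\in V(G')$. Hence $P$ is a path of length at most $r$ in $G'$ between $u$ and $v$, which shows $u\in N_r^{G'}[v]\cap V(H_i)$. For the strong case, take $u\in\Sreach_r[G,L,v]\cap V(H_i)$ witnessed by a path $P$ of length at most $r$ with $u\le_L v$ and $v<_L w$ for every internal vertex $w$ of $P$. Then $u\le_L v$, and $u\le_L v<_L w$ gives $u\le_L w$ for every internal $w$, so again $u\le_L x$ for every vertex $x$ of $P$; the same argument yields $V(P)\subseteq V(G')$ and hence $u\in N_r^{G'}[v]\cap V(H_i)$.

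There is essentially no hard step here: the content is just that $L$ refines the block order of the decomposition, so the position constraints built into weak and strong $r$-reachability already keep the relevant path out of $\bigcup_{1\le j<i}V(H_j)$. The one place to be mildly careful is the strong case, where it is precisely the internal vertices being above $v$ in $L$ (together with $v$ being at least $u$) that confines the path to $G'$; one should resist the temptation to argue directly about $v$ and instead deduce $V(P)\subseteq V(G')$ from the uniform bound $u\le_L x$ on all of $P$.
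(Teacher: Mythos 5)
Your proof is correct and takes essentially the same approach as the paper's one-sentence proof: the key observation in both is that $L$ refines the block order, so both weak and strong $r$-reachability of $u\in V(H_i)$ force every vertex of the witnessing path to be $\ge_L u$ and hence to avoid $\bigcup_{j<i}V(H_j)$. You have simply spelled out the details the paper leaves implicit.
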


\begin{proof}
  If a path $P$ with one endpoint $v$ visits a vertex that is smaller than
  a vertex of $H_i$, then the path cannot be continued to weakly or
  strongly visit a vertex of $H_i$.
\end{proof}

\noindent
Now we are in a position to give upper bounds of $\col_r(G)$ and
$\wcol_r(G)$ in terms of the width of a flat decomposition.

\begin{lemma}\label{lem:spd}\mbox{}\\*
  Let $f\colon\N\to \N$ and let $r,k\in\N$. Let $G$ be a graph that admits
  an $f$-flat decomposition of width~$k$. Then we have
  \begin{equation*}
    \col_r(G)\:\le\: (k+1)\cdot f(r).
  \end{equation*}
\end{lemma}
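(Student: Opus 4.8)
The plan is to work with the linear order $L$ that the given $f$-flat decomposition $\mathcal H=(H_1,\dots,H_\ell)$ of width $k$ induces on $V(G)$ (fix an arbitrary order inside each $H_i$, and let every vertex of $H_i$ precede every vertex of $H_j$ whenever $i<j$), and to show $|\Sreach_r[G,L,v]|\le (k+1)\,f(r)$ for every $v\in V(G)$. Fix $v$, and let $i$ be the index with $v\in V(H_i)$. Every $u\in\Sreach_r[G,L,v]$ satisfies $u\le_L v$, hence lies in some $V(H_j)$ with $j\le i$; since the sets $V(H_j)$ partition $V(G)$, we get the disjoint decomposition $\Sreach_r[G,L,v]=\bigcup_{j\le i}\bigl(\Sreach_r[G,L,v]\cap V(H_j)\bigr)$. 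So it suffices to prove (i) each set $\Sreach_r[G,L,v]\cap V(H_j)$ has size at most $f(r)$, and (ii) at most $k+1$ of these sets are non-empty. Summing over $j$ then gives $|\Sreach_r[G,L,v]|\le (k+1)f(r)$, and since $L$ depends only on $\mathcal H$, this yields $\col_r(G)\le (k+1)f(r)$.

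Step (i) is immediate. Writing $G_j=G-\bigcup_{1\le j'<j}V(H_{j'})$, Lemma~\ref{lem:deletevertices} gives $\Sreach_r[G,L,v]\cap V(H_j)\subseteq N^{G_j}_r[v]\cap V(H_j)$ (note $v\in V(G_j)$ because $j\le i$), and $f$-flatness of $\mathcal H$ says precisely that $H_j$ $f$-spreads on $G_j$, so $|N^{G_j}_r[v]\cap V(H_j)|\le f(r)$.

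Step (ii) is where the real work lies. Let $C$ be the component of $G_i=G-\bigcup_{1\le j'<i}V(H_{j'})$ containing $v$. The claim is that whenever $j<i$ and $\Sreach_r[G,L,v]\cap V(H_j)\ne\emptyset$, the subgraph $H_j$ is connected to $C$. To see this, take $u\in \Sreach_r[G,L,v]\cap V(H_j)$ together with a witnessing path $P=v\,p_1\cdots p_{m-1}\,u$ whose internal vertices $p_1,\dots,p_{m-1}$ are all $>_L v$. Since $v\in V(H_i)$ and $v,p_1,\dots,p_{m-1}$ are all $\ge_L v$, none of them lies in $\bigcup_{1\le j'<i}V(H_{j'})$; hence $v\,p_1\cdots p_{m-1}$ is a connected subgraph of $G_i$ containing $v$, so all of its vertices lie in $C$. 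Its last vertex (which is $p_{m-1}$, or $v$ itself when $m=1$) is adjacent in $G$ to $u$, and $u\notin V(C)$ because $u\in V(H_j)$ with $j<i$; this exhibits an edge of $G$ between $C$ and $H_j$, proving the claim. Consequently the number of $j<i$ with $\Sreach_r[G,L,v]\cap V(H_j)\ne\emptyset$ is at most the number of distinct members of $\{H_1,\dots,H_{i-1}\}$ connected to $C$, which is exactly the separating number of $C$, and hence at most $k$ by definition of the width of $\mathcal H$. Adding the single index $j=i$ (for which $v$ itself lies in $\Sreach_r[G,L,v]\cap V(H_i)$), we obtain at most $k+1$ non-empty sets, proving (ii).

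The main obstacle is claim~(ii): the key observation is that the restriction ``internal vertices $>_L v$'' in the definition of strong reachability is exactly what forces the whole witnessing path, except for its last vertex, to stay inside the single component $C$ of $G_i$, which is what lets us read off the required edge from $C$ to $H_j$ and then invoke the width bound. Step (i), the summation, and the passage back to $\col_r(G)$ are then routine bookkeeping.
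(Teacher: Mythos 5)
Your proof is correct and follows essentially the same route as the paper: the same linear order induced by the decomposition, Lemma~\ref{lem:deletevertices} plus $f$-spreading to bound each $\Sreach_r[G,L,v]\cap V(H_j)$ by $f(r)$, and the width bound on the separating number of the component containing $v$ to limit the number of relevant subgraphs to $k+1$. The only difference is that you spell out in detail why a witnessing path forces $H_j$ to be connected to that component, which the paper dispatches with ``by definition of $L$.''
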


\begin{proof}
  Let $\mathcal{H}=(H_1,\ldots, H_\ell)$ be an $f$-flat decomposition of
  $G$ of width $k$, and let $L$ be a linear order defined from the
  decomposition. Let $v\in V(G)$ be an arbitrary vertex and choose~$q$ such
  that $v\in V(H_{q+1})$. Let $C$ be the component of $\GHi{(q+1)}$ that
  contains $v$, and let $Q_1,\ldots,Q_m$, $1\le m\le q$, be the subgraphs
  among $H_1,\ldots,H_q$ that have a connection to~$C$. Since $\mathcal{H}$
  has width~$k$, we have $m\le k$. By definition of~$L$, the vertices in
  $\Sreach_r[G,L,v]$ can only lie on $Q_1,\ldots,Q_m$ and on $H_{q+1}$,
  hence on at most $k+1$ subgraphs. For $j=1,\ldots,m$, assume that
  $Q_j=H_{i_j}$ and let $G_j'=\GHi{i_j}$. Then by \Cref{lem:deletevertices}
  we have $\Sreach_r[G,L,v]\cap Q_j\subseteq N_r^{G'_j}[v]\cap Q_j$. Since
  $H_{i_j}=Q_j$ $f$-spreads on $G_j'$, we have
  $\bigl|N_r^{G'_j}[v]\cap Q_j\bigr|\le f(r)$. The result follows.
\end{proof}

\begin{lemma}\label{lem:spdwcol}\mbox{}\\*
  Let $f\colon\N\to\N$ and let $r,k\in\N$. Let $G$ be a graph that admits a
  connected $f$-flat decomposition of width $k$. Then we have
  \begin{equation*}
    \wcol_r(G)\:\le\: \binom{r+k}{k}\cdot f(r).
  \end{equation*}
\end{lemma}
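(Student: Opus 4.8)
The plan is to combine the two tools we already have: Lemma~\ref{lem:3.1}, which turns a connected decomposition of width~$k$ into a minor $H=G/\mathcal{H}$ of tree-width at most~$k$, and Theorem~\ref{thm:tw}, which bounds weak reachability in a graph in terms of the elimination width of a linear order. More precisely, first I would take a connected $f$-flat decomposition $\mathcal{H}=(H_1,\dots,H_\ell)$ of $G$ of width~$k$, fix an arbitrary linear order on the vertices of each $H_i$, and let $L$ be the order on $V(G)$ defined from~$\mathcal{H}$ as described before Lemma~\ref{lem:deletevertices}. Let $L'$ be the corresponding order of $V(H)$ (\ie the order of the subgraphs $H_1,\dots,H_\ell$), which is exactly the order used in the proof of Lemma~\ref{lem:3.1}; that proof shows $\col_\infty(H)\le k+1$ using this very order, so in particular the elimination width of $L'$ on $H$ is at most~$k$.

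Next, fix a vertex $v\in V(G)$ with $v\in V(H_{q+1})$. The key claim is a two-step bound: the number of \emph{subgraphs} $H_i$ ($i\le q$) that contain a vertex of $\Wreach_r[G,L,v]$ is at most $|\Wreach_\infty[H,L',H_{q+1}]|\le\binom{r+k}{k}$, and within each such subgraph the number of vertices of $\Wreach_r[G,L,v]$ is at most $f(r)$. For the second part I would argue exactly as in the proof of Lemma~\ref{lem:spd}: if $H_i=Q_j$ contains a weakly $r$-reachable vertex, then by Lemma~\ref{lem:deletevertices} we have $\Wreach_r[G,L,v]\cap V(H_i)\subseteq N_r^{G'_i}[v]\cap V(H_i)$ where $G'_i=G-\bigcup_{1\le p<i}V(H_p)$, and since $H_i$ $f$-spreads on $G'_i$ this has size at most $f(r)$. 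Together with the contribution from $H_{q+1}$ itself, which also has at most $f(r)$ vertices weakly reachable by the same Lemma, multiplying gives $|\Wreach_r[G,L,v]|\le\binom{r+k}{k}\cdot f(r)$, provided I am slightly careful to fold the $H_{q+1}$ term into the count (it corresponds to $H_{q+1}$ itself being weakly reachable in $H$, and $v\in\Wreach_\infty[H,L',H_{q+1}]$, so it is already counted among the $\binom{r+k}{k}$ subgraphs).

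The heart of the matter — and the step I expect to be the main obstacle — is the first part: showing that whenever $u\in\Wreach_r[G,L,v]\cap V(H_i)$ with $i\le q$, the subgraph $H_i$ is weakly reachable from $H_{q+1}$ in $H$ with respect to $L'$. The witness is a path $P$ in $G$ from $v$ to $u$ of length at most $r$ on which $u$ is $L$-minimal; contracting the subgraphs of $\mathcal{H}$ collapses $P$ to a walk in $H$ from $H_{q+1}$ to $H_i$, and since every vertex of $P$ lies in some $H_j$ with $j\ge i$ (because $u\in V(H_i)$ is $L$-minimal on $P$ and $L$ orders by block index), every block index encountered is $\ge i$. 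Reducing this walk to a path in $H$ keeps all intermediate blocks with index $\ge i$, so $H_i$ is indeed $L'$-minimal on that path, i.e.\ $H_i\in\Wreach_\infty[H,L',H_{q+1}]$. Then Theorem~\ref{thm:tw} applied to $(H,L')$, whose elimination width is at most~$k$, gives $|\Wreach_r[H,L',H_{q+1}]|$ — and, since $H$ has no long relevant detours, really $|\Wreach_\infty[H,L',H_{q+1}]|$ via the same argument used for $\col_\infty(H)\le k+1$ — wait, more cleanly: $\Wreach_r[G,L,v]$ meets only blocks $H_i$ with $i\le q$ that are connected to $C$ together with $H_{q+1}$, and each such $H_i$ with a weakly $r$-reachable vertex is reached in $H$ by a path of length at most~$r$ (the contracted image of~$P$, after path-reduction, has length at most~$r$), so $H_i\in\Wreach_r[H,L',H_{q+1}]$ and the count is at most $\binom{r+k}{k}$ by Theorem~\ref{thm:tw}. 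Assembling these pieces yields the stated bound $\wcol_r(G)\le\binom{r+k}{k}\cdot f(r)$.
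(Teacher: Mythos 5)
Your proof is correct and follows the same route as the paper's: contract the blocks via Lemma~\ref{lem:3.1} to get a graph $H$ with an order $L'$ of elimination width at most $k$, apply Theorem~\ref{thm:tw} to bound by $\binom{r+k}{k}$ the number of blocks that can contain weakly $r$-reachable vertices, and bound the contribution of each block by $f(r)$ via Lemma~\ref{lem:deletevertices} and the $f$-spreading property. Your verification that the contracted image of a witness path is a path in $H$ of length at most $r$ on which $H_i$ is $L'$-minimal is a detail the paper leaves implicit, but there is no genuine difference in approach.
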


\begin{proof}
  Let $\mathcal{H}=(H_1,\ldots, H_\ell)$ be a connected $f$-flat
  decomposition of width $k$, and let~$L$ be a linear order defined from
  it. We contract the subgraphs $H_1,\ldots, H_\ell$ to obtain a graph $H$
  of tree-width at most~$k$ (see Lemma~\ref{lem:3.1}). We identify the
  vertices of $H$ with the subgraphs~$H_i$. For a vertex $v\in V(G)$,
  consider the subgraph $H_i$ with $v\in V(H_i)$. By \Cref{thm:tw}, the
  vertex~$H_i$ weakly $r$-reaches at most $\binom{r+k}{k}$ vertices in $H$
  that are smaller than or equal to $H_i$ in the order on $V(H)$ induced by
  $L$. These vertices $H_j$ that are weakly $r$-reachable from~$H_i$ in~$H$
  are the only subgraphs in $G$ that may contain vertices that are weakly
  $r$-reachable from~$v$ in~$G$. We conclude that there are at most
  $\binom{r+k}{k}$ subgraphs among $H_1,\ldots,H_\ell$ in~$G$ that contain
  vertices that are weakly $r$-reachable from~$v$. As in the previous proof
  we can argue that there are at most $f(r)$ weakly $r$-reachable vertices
  on each subgraph, which completes the proof.
\end{proof}

\section{The weak $r$-colouring numbers of graphs excluding a fixed
  complete minor}\label{sec:wcolmain}

In this section we prove Theorem~\ref{thm:wcol}. We will provide a more
detailed analysis for the \mbox{$r$-colouring} numbers in the next section.

\begingroup
\def\thethm{\relax}
\begin{thm} \ {\rm(Theorem~\ref{thm:wcol})}\mbox{}\\*
  Let $t\ge4$. For every graph $G$ that excludes $K_t$ as a minor, we have
  \begin{equation*}
    \wcol_r(G)\:\le\: 
    \binom{r+t-2}{t-2}\cdot (t-3)(2r+1)\in \Oof(r^{\,t-1}).
  \end{equation*}
\end{thm}
\endgroup

\noindent
Theorem \ref{thm:wcol} is a direct consequence of \Cref{lem:spdwcol} and of
\Cref{lem:minordecompnew}. This lemma states that connected flat
decompositions of small width exist for graphs that exclude a fixed
complete graph $K_t$ as a minor. This result is inspired by the result on
cop-decompositions presented in~\cite{andreae86}.

\begin{lemma}\label{lem:minordecompnew}\mbox{}\\*
  Let $t\ge4$ and let $f\colon\N\to \N$ be the function $f(r)=(t-3)(2r+1)$.
  Let $G$ be a graph that excludes~$K_t$ as a minor. Then there exists a
  connected $f$-flat decomposition of $G$ of width at most $t-2$.
\end{lemma}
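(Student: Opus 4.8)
The plan is to build the decomposition greedily, one isometric path at a time, while maintaining the invariant that every component of the graph obtained after removing the pieces chosen so far is ``seen'' by at most $t-2$ previous pieces, and that each component together with the pieces attached to it admits a branch structure witnessing a $K_s$-minor-like configuration. Concretely, I would prove by induction a stronger statement: if $C$ is a connected subgraph of $G$, and $Q_1,\dots,Q_s$ are disjoint connected subgraphs of $G$, each $Q_j$ disjoint from $C$, each $Q_j$ connected to $C$, each $Q_j$ an isometric path in a suitable subgraph, and such that the $Q_j$ together with $C$ cannot be assembled into a $K_{s+2}$-minor in the ambient graph, then $C$ admits a connected $f$-flat decomposition (relative to the ambient graph $G - \bigcup_{j} V(Q_j)$ restricted appropriately) of width at most $t-2$, built entirely from isometric paths. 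The original statement is the case $s=0$, $C=G$.

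The recursive step is the heart of the argument, and I expect it to be the main obstacle. Given $C$ with its attached paths $Q_1,\dots,Q_s$, I would first pick, for each $j$, a vertex $c_j \in V(C)$ adjacent to $Q_j$, and then choose the first piece $H$ of the decomposition of $C$ to be an isometric path $P$ in $C$ chosen so that $P$ simultaneously ``touches'' all the $c_j$ in a controlled way — this is exactly the cops-and-robber idea of Andreae: the new cop is placed along a shortest path connecting the territory of the existing cops. The key combinatorial point is that after deleting $V(P)$ from $C$, each new component $C'$ is connected to $P$ and to some subset of the old $Q_j$'s, and I must show the total number of old-plus-new pieces connected to $C'$ is still at most $t-2$; otherwise, contracting $C'$, the pieces connected to it, and (via $P$ and $C$) collapsing paths between them would produce a $K_t$-minor, contradicting the hypothesis. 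Making this contraction-to-$K_t$ argument precise — in particular verifying that the contracted pieces are pairwise adjacent in the minor, using that $P$ is a single path through which several old $Q_j$'s can be routed to a common vertex — is the delicate part, and is where the bound $t-2$ (rather than something larger) comes from: one vertex for $C'$, one for $P$, and at most $t-3$ others, totalling $t-1$ branch sets, plus the universal vertex that would complete $K_t$.

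Once the width bound is in place, I would verify the $f$-flatness of the resulting decomposition: each piece $H_i$ is an isometric path in $G - \bigcup_{j<i} V(H_j)$, so by Lemma~\ref{lem:shortestpath} any vertex $v$ has at most $2r+1$ vertices of $H_i$ in its closed $r$-neighbourhood within that subgraph. But a single isometric path only gives the factor $2r+1$, not $(t-3)(2r+1)$; the factor $t-3$ arises because I would actually allow each $H_i$ to be a union of at most $t-3$ isometric paths (one can glue the shortest paths to the various $c_j$), and then $|N_r^{G'}[v] \cap V(H_i)| \le (t-3)(2r+1)$ follows by summing the per-path bound. One has to check that such a bounded union of isometric paths is still ``small'' in the flatness sense — that is immediate — and that it is still connected so that Lemma~\ref{lem:3.1} and hence Lemma~\ref{lem:spdwcol} apply. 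Finally, feeding $f(r) = (t-3)(2r+1)$ and $k = t-2$ into Lemma~\ref{lem:spdwcol} gives $\wcol_r(G) \le \binom{r+t-2}{t-2}(t-3)(2r+1)$, which is Theorem~\ref{thm:wcol}. The induction terminates because each step removes at least one vertex from $C$.
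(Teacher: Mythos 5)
Your proposal is essentially the paper's construction: iteratively build the decomposition, maintain the invariant that the pieces attached to each remaining component form a $K_s$-minor model with $s\le t-2$, take the new piece to be a minimal subtree of a breadth-first search tree inside the component that covers neighbours of all attached pieces, derive the width bound by noting that a component seeing $t-1$ pairwise-adjacent pieces could be contracted to produce a $K_t$-minor, and obtain $f$-flatness from \Cref{lem:shortestpath}. One small imprecision in your accounting: you describe the new piece as a union of shortest paths from a root to each $c_j$, which would in general be up to $s\le t-2$ isometric paths rather than $t-3$; the paper recovers the $t-3$ by rooting the BFS tree at a vertex of $C$ that is \emph{already} adjacent to $Q_1$, so that only $s-1\le t-3$ additional root-to-leaf paths (to neighbours of $Q_2,\dots,Q_s$) are needed.
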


\begin{proof}
  Without loss of generality we may assume that $G$ is connected. We will
  iteratively construct a connected $f$-flat decomposition
  $H_1,\ldots,H_\ell$ of $G$. For all $q$, $1\le q<\ell$, we will maintain
  the following invariant. Let $C$ be a component of $\GHi{(q+1)}$. Then
  the subgraphs $Q_1,\ldots,Q_s\in \{H_1,\ldots,H_q\}$ that are connected
  to $C$ form a minor model of the complete graph $K_s$, for some
  $s\le t-2$. This will immediately imply our claim on the width of the
  decomposition.

  To start, we choose an arbitrary vertex $v\in V(G)$ and let $H_1$ be the
  connected subgraph~$G[v]$. Clearly, $H_1$ $f$-spreads on $G$, and the
  above invariant holds (with $s=1$).

  Now assume that for some $q$, $1\le q\le\ell-1$, the sequence
  $H_1,\ldots,H_q$ has already been constructed. Fix some component $C$ of
  $\GHi{(q+1)}$ and assume that the subgraphs
  $Q_1,\ldots,Q_s\in \{H_1,\ldots, H_q\}$ that have a connection to $C$
  form a minor model of $K_s$, for some $s\le t-2$. Because~$G$ is
  connected, we have $s\ge1$. Let $v$ be a vertex of $C$ that is adjacent
  to a vertex of $Q_1$. Let $T$ be a breadth-first search tree in $G[C]$
  with root $v$. We choose $H_{q+1}$ to be a minimal connected subgraph of
  $T$ that contains $v$ and that contains for each $i$, $1\le i\le s$, at
  least one neighbour of $Q_i$.
  
  It is easy to see that for every component $C'$ of $\GHi{(q+2)}$, the
  subgraphs $Q_1,\ldots,Q_{s'}\in \{H_1,\ldots,H_{q+1}\}$ that are
  connected to $C'$ form a minor model of a complete graph $K_{s'}$, for
  some $s'\le t-1$. Let us show that in fact we have $s'\le t-2$. Towards a
  contradiction, assume that there are $Q_1,\ldots,Q_{t-1}\in
  \{H_1,\ldots,H_{q+1}\}$ that have a connection to $C'$ and such that
  the~$Q_i$ form a minor model of $K_{t-1}$. As each $Q_i$ has a connection
  to $C'$, we can contract the whole component $C'$ to find $K_t$ as a
  minor, a contradiction.

  Let us finally show that the decomposition is $f$-flat. We show that the
  newly added subgraph~$H_{q+1}$ $f$-spreads on $\GHi{(q+1)}$. By
  construction, $H_{q+1}$ is a subtree of $T$ that consists of at most
  $t-3$ isometric paths in $\GHi{(q+1)}$ (possibly not disjoint), since $T$
  is a breadth-first search tree and $v$ is already a neighbour of $Q_1$.
  Now the claim follows immediately from Lemma~\ref{lem:shortestpath}.
\end{proof}

\section{The $r$-colouring numbers of graphs excluding a fixed
  minor}\label{sec:main}
  
For graphs that exclude a complete graph as a minor, we already get a good
bound on the strong $r$-colouring numbers. However, if a sparse graph is
excluded, we can do much better. In this case we will construct an
isometric paths decomposition, where only few paths are separating (in
general, each connected subgraph in our proof may subsume many isometric
paths).

The proof idea is essentially the same as that for
\Cref{lem:minordecompnew}. We will iteratively construct an isometric paths
decomposition $(P_1,\ldots, P_\ell)$ of $G$ such that the components $C$ of
$G[P_{\ge(q+1)}]$ are separated by a minor model of a proper subgraph $M$
of $H-x$. To optimise the bounds on the width of the decomposition, we will
first try to maximise the number of edges in the subgraph $M$, before we
add more vertices to the model. During the construction we will have to
\emph{re-interpret} the separating minor model, as otherwise connections of
a vertex model (the subgraph representing a vertex of~$M$) to the component
may be lost.

To implement the above mentioned re-interpretation of the minor model it
will be more convenient to work with a slightly different (and
non-standard) definition of a minor model. Let $M$ be a graph with vertices
$h_1,\ldots, h_n$. The graph $M$ is a minor of $G$ if there are pairwise in
$G$ disjoint connected subgraphs $H_1,\ldots, H_n$ and pairwise internally
disjoint paths $E_{ij}$ for $\{h_i,h_j\}\in E(M)$ that are also internally
disjoint from the $H_1,\ldots, H_n$, such that if $e_{ij}=\{h_i,h_{j}\}$ is
an edge of $M$, then $E_{ij}$ connects a vertex of $H_i$ with a vertex of
$H_j$. We call the subgraph $H_i$ of $G$ the \emph{model} of $h_i$ in $G$
and the path $E_{ij}$ the \emph{model} of $e_{ij}$ in $G$.

One can easily see that a graph $H$ is a minor of a graph $G$ according to
the definition in Section~\ref{sec:preliminaries} if and only if $H$ is a
minor of $G$ according to the definition given above. The reason to
introduce paths $E_{ij}$ (rather than edges $e_{ij}$) is that we want to
control the number of vertices in vertex models connected to a component.
This is impossible for the connecting paths $E_{ij}$, so it would be
impossible if we let the vertex models grow to encompass the $E_{ij}$.

\begin{lemma}[following \cite{andreae86}]\label{lem:minordecomp}\mbox{}\\*
  Let $H$ be a graph and $x$ a vertex of $H$. Set $h=|E(H-x)|$, and let
  $\alpha$ be the number of isolated vertices of $H-x$. Then every graph
  $G$ that excludes $H$ as a minor admits an isometric paths decomposition
  of width at most $3h+\alpha.$
\end{lemma}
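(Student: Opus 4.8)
The plan is to mimic the inductive construction used in the proof of Lemma~\ref{lem:minordecompnew}, but to refine the bookkeeping so that we keep track of a partial model of $H$ (using the non-standard definition with branch sets $H_i$ and connecting paths $E_{ij}$) rather than just a complete minor. We build an isometric paths decomposition $\mathcal{H}=(H_1,\ldots,H_\ell)$ of $G$ one isometric path at a time. As in the previous lemma we may assume $G$ is connected. The invariant to maintain, after the first $q$ paths have been placed, is the following: for each component $C$ of $G-\bigcup_{p\le q}V(H_p)$, if $Q_1,\ldots,Q_s\in\{H_1,\ldots,H_q\}$ are the paths connected to $C$, then these paths, together with some of the paths not connected to $C$ playing the role of the connecting paths $E_{ij}$, form a model of a subgraph $H'\subseteq H$ in which the vertices corresponding to $Q_1,\ldots,Q_s$ are pairwise ``linked'' in the appropriate sense — concretely, contracting $C$ would witness a model of a graph obtained from $H'$ by adding a new vertex adjacent to all of $Q_1,\ldots,Q_s$. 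If that added vertex, together with the $Q_i$, were ever to realise all of $H$, we would contract $C$ and get $H$ as a minor of $G$, a contradiction; so the invariant forces $s$ to stay below the relevant threshold.

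First I would set up the construction step: given component $C$ with separating paths $Q_1,\ldots,Q_s$ realising (part of) the model, pick a vertex $v\in V(C)$ adjacent to $Q_1$ and a BFS tree $T$ of $G[C]$ rooted at $v$; then let $H_{q+1}$ be a minimal subtree of $T$ containing $v$ and, for each $i$, a neighbour of $Q_i$. Such a minimal subtree is a union of at most $s-1$ root-to-leaf paths of $T$, hence at most $s-1$ isometric paths of $G-\bigcup_{p\le q}V(H_p)$ — but crucially, to get an \emph{isometric paths decomposition} (each $H_i$ a single isometric path), I would instead add these paths one at a time, peeling off one root-to-leaf path, updating the component/model structure, and repeating; Lemma~\ref{lem:shortestpath} then gives the $f$-flat bound automatically with $f(r)=2r+1$ for each individual path. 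The main combinatorial content is to bound how many of the previously placed paths can be ``separating'' for a component, i.e.\ how large $s$ can get, and here is where the quantities $h=|E(H-v)|$ and $\alpha$ (isolated vertices of $H-v$) enter: each separating path $Q_i$ either corresponds to a branch set that already carries an edge of $H-v$ incident to it (at most $2h$ such, since each edge has two endpoints) or to an isolated vertex of $H-v$ (at most $\alpha$ such), or is one of the auxiliary paths used as an edge-model $E_{ij}$ (at most $h$ such). Summing, $s\le 2h+\alpha+h=3h+\alpha$.

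The step I expect to be the main obstacle is making the invariant precise enough that (a) it is genuinely maintained when a new path $H_{q+1}$ is added and a component $C$ splits into several components $C'$, each inheriting a sub-model, and (b) it actually yields the counting $s\le 3h+\alpha$ rather than something weaker. The subtlety is that when $H_{q+1}$ is carved out of $C$, it may simultaneously serve as a new branch set \emph{and} as (part of) several connecting paths $E_{ij}$ for the various children $C'$, and one must check that these roles can be assigned consistently and that no child component ends up forced to realise all of $H$ unless $G\succcurlyeq H$. I would handle this by insisting in the invariant that the model of $H'$ is chosen so that the neighbours-of-$Q_i$ structure is recorded explicitly, and by a careful case analysis (following Andreae~\cite{andreae86}) on whether a child component sees a given old path through its branch set or only through a connecting path. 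The flatness and the ``isometric path'' property are then the easy parts, coming directly from Lemmas~\ref{lem:shortestpath} and~\ref{lem:deletevertices} once the width bound $3h+\alpha$ is established.
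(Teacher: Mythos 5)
Your overall plan is in the right spirit: build an isometric paths decomposition one path at a time while maintaining, for each current component~$C$, a partial minor model of a subgraph of $H-v$ (branch sets $H_i$ plus connecting paths $E_{ij}$), and argue that if the model ever filled all of $H-v$ you could contract $C$ to get $H$. That is indeed what the paper does. But there is a genuine gap at the heart of the counting, and a secondary issue with the construction step.

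The counting gap: you assert that the number of separating isometric paths lying inside branch sets is at most $2h+\alpha$, justified by ``each edge has two endpoints.'' That is a heuristic, not an argument. A branch set $H_i$ is, in general, assembled over many stages by absorbing prefixes of former edge-paths $E_{ij}$, so it can consist of far more than $d_i$ of the decomposition's isometric paths; why should at most $d_i$ of them still have a neighbour in the current $C$? The paper resolves this with a pebble invariant that your proposal is missing: each non-isolated $h_i\in V(M)$ carries exactly $d_i$ pebbles $p_{ij}$, one per incident edge of $H-v$, placed so that the pebbled vertices are \emph{exactly} the vertices of $H_i$ with a neighbour in $C$. When $C$ shrinks, each pebble $p_{ij}$ is pushed along its edge-path $E_{ij}$ (absorbing the traversed prefix into $H_i$) until it again sits on a vertex adjacent to the new component, or is retired together with the edge. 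This invariant is what ties the number of separating paths inside $\bigcup_i H_i$ to $\sum_i d_i + \alpha = 2h+\alpha$; without it, the bound $s\le 3h+\alpha$ does not follow from what you have written. Your invariant (``the $Q_i$ are pairwise linked'' / ``contracting $C$ would witness a model of $H'$ plus a new universal vertex'') is too coarse to carry this count, and it also quietly identifies the separating \emph{paths} $Q_i$ with \emph{branch sets}, which they are not.

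The secondary issue: you propose to reuse the BFS-tree construction of Lemma~\ref{lem:minordecompnew} (a minimal subtree hitting a neighbour of each $Q_i$) and then ``peel off one root-to-leaf path at a time, updating the component/model structure.'' You flag this as the main obstacle, and it is: once you place the first root-to-leaf path, the component $C$ splits, so the remaining branches of that subtree are no longer well-defined objects to add later, and the model update is not specified. The paper avoids this entirely by adding exactly one isometric path per step with a declared role: in Case~1 a shortest path joining neighbours (in $C$) of two pebbled vertices $v_{ij}\in H_i$, $v_{ji}\in H_j$, which becomes the new edge-model $E_{ij}$; in Case~2 (when $M$ is already induced) a single new vertex, which becomes a new branch set. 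This two-case step, together with the pebble-pushing update, is what makes both the invariant maintenance and the width count go through; your sketch does not reach that level of precision, and in particular the bound $3h+\alpha$ is not actually derived.
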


\begin{proof}
  Without loss of generality we may assume that $G$ is connected. Assume
  $H-x$ has vertices $h_1,\ldots,h_k$, $k=|V(H)|-1$. For $1\le i\le k$,
  denote by $d_i$ the degree of $h_i$ in $H-x$.
  
  We will iteratively construct an isometric paths decomposition
  $(P_1,\ldots, P_\ell)$ of $G$. For all~$q$, $1\le q<\ell$, we will
  maintain the four invariants given below. With each component $C$ of
  $G[P_{\ge(q+1)}]$ we associate a minor model of a proper subgraph $M$ of
  $H-x$.

  \begin{enumerate}[itemsep=0pt, topsep=1pt]
  \item\label{it:1} For $h_i\in V(M)$, the models $H_i$ of $h_i$ in $G$ use
    vertices of $P_1,\ldots, P_q$ only.
  \item\label{it:peb} For each $H_i$ with $h_i\in V(M)$ such that $h_i$ is
    an isolated vertex in $H-x$, $H_i$ will consist of a single vertex
    only.

    For each $H_i$ with $h_i\in V(M)$ such that $h_i$ is not an isolated
    vertex in $H-x$, it is possible to place a set of $d_i$ pebbles
    $\{\,p_{ij}\mid\{h_i,h_j\}\in E(H-x)\,\}$ on the vertices of~$H_i$
    (with possibly several pebbles on a vertex), in such a way that the
    pebbles occupy exactly the set of vertices of~$H_i$ with a neighbour in
    $C$. In particular, each $H_i$ has between $1$ and $d_i$ vertices with
    a neighbour in $C$.
  \item\label{it:3} For each edge $e_{ij}=\{h_i, h_j\}\in E(M)$, the model
    $E_{ij}$ of $e_{ij}$ in $G$ has the following properties.
    \begin{enumerate}[itemsep=0pt, topsep=1pt]
    \item The endpoints of $E_{ij}$ are the vertices with pebbles $p_{ij}$
      in $H_i$ and $p_{ji}$ in $H_j$.
    \item\label{it:3singlepath} The internal vertices of $E_{ij}$ belong to
      a single path $P_p$, where $p\le q$.
    \item Assume $E_{ij}$ has internal vertices in $P_p$. Let $D$ be the
      component of $G[P_{\ge p}]$ that contains $P_p$. Let $v_{ij}$ and
      $v_{ji}$ be the vertices of $H_i$ and $H_j$, respectively that are
      pebbled with $p_{ij}$ and $p_{ji}$ (at the time $P_p$ was defined).
      Then $E_{ij}$ is an isometric path in
      $G[D\cup \{v_{ij},v_{ji}\}]-e_{ij}$. (This condition is not necessary
      for the proof of the lemma; it will be used in the proof of
      Theorem~\ref{thm:col}, though.)
    \end{enumerate}
  \item\label{it:all} All vertices on a path of $P_1,\ldots, P_q$ that have
    a connection to $C$ are part of the minor model.
  \end{enumerate}

  \noindent
  Let us first see that maintaining these invariants implies that the
  isometric paths decomposition has the desired width. By
  Condition~\ref{it:all}, the separating number of the component $C$ is
  determined by the number of isometric paths that are part of the minor
  model of $M$ and have a connection to~$C$. To count this number of paths,
  we count the number $m_1$ of paths that lie in any vertex model $H_i$ for
  $h_i\in V(M)$ and have a connection to $C$, and we count the number~$m_2$
  of paths that correspond to the edges $e_{ij}$ of~$M$. By
  Condition~\ref{it:peb}, $m_1$ is at most the number of pebbles in $H$
  plus the number of isolated vertices of $H-x$ . Since the number of
  pebbles of each model $H_i$ is at most $d_i$, the number of pebbles is at
  most the sum of the vertex degrees, and therefore
  $m_1\le 2|E(H-x)|+\alpha$. By Condition~\ref{it:3}(b), $m_2$ is at
  most $|E(H-x)|$. Finally, since $M$ is a proper subgraph of $H-x$, either
  $m_1< 2|E(H-x)|+\alpha$ or $m_2<|E(H-x)|$ and hence we have
  $m_1+m_2< 3|E(H-x)|+\alpha$.

  We show how to construct an isometric paths decomposition with the
  desired properties. To start, we choose an arbitrary vertex $v\in V(G)$
  and let $P_1$ be the path of length $0$ consisting of~$v$ only. For every
  connected component of $G-V(P_1)$, we define $M$ as the single vertex
  graph~$K_1$ and the model $H_1$ of this vertex as $P_1$. All pebbles are
  placed on $v$. As $G$ is connected, we see that Condition~\ref{it:all} is
  satisfied; all other invariants are clearly satisfied.

  Now assume that for some $q$, $1\le q\le\ell-1$, the sequence
  $P_1,\ldots,P_q$ has already been constructed. Fix some component $C$ of
  $G[P_{\ge(q+1)}]$ and assume that the pebbled minor model of a proper
  subgraph $M\subseteq H-x$ with the above properties for $C$ is given. We
  first find an isometric path $P_{q+1}$ that lies completely inside $C$
  and add it to the isometric paths decomposition. The exact choice of
  $P_{q+1}$ depends on which of the following two cases we are in.

  \smallskip
  Case 1: There is a pair $h_i,h_j$ of non-adjacent vertices in $M$ such
  that $\{h_i,h_j\}\in E(H-x)$.\\
  By Condition~\ref{it:peb}, the pebbles~$p_{ij}$ and $p_{ji}$ lie on some
  vertices $v_{ij}$ of $H_i$ and $v_{ji}$ of $H_j$, respectively, that have
  a neighbour in $C$. Let $v_i$ and~$v_j$ be vertices of $C$ with
  $\{v_{ij},v_i\}, \{v_{ji},v_j\}\in E(G)$ (possibly $v_i=v_j$) such that
  the distance between $v_i$ and $v_j$ in $C$ is minimum among all possible
  neighbours of $v_{ij}$ and $v_{ji}$ in $C$. We choose $P_{q+1}$ as an
  arbitrary shortest path in $C$ with endpoints $v_i$ and $v_j$. We add the
  edge $\{h_i,h_j\}$ to $M$ and the path
  $E_{ij}=\{v_{ij},v_i\}+P_{q+1}+\{v_j,v_{ji}\}$ to the model of $M$.

  \smallskip
  Case 2: $M$ is an induced subgraph of $H-x$.\\
  We choose an arbitrary vertex $v\in V(C)$ and define $P_{q+1}$ as the
  path of length $0$ consisting of~$v$ only. We add an isolated vertex
  $h_a$ to $M$, for some $a$ with $1\le a\le k$, such that $h_a$ was not
  already a vertex of $M$ and define $H_a=P_{q+1}$, with any pebbles on
  $v$.

  \smallskip
  Because in both cases the new path $P_{q+1}$ lies completely in $C$,
  every other component of $G[P_{\ge(q+1)}]$ (and its respective minor
  model) is not affected by this path. Therefore, it suffices to show how
  to find a pebbled minor model with the above properties for every
  component of $C-V(P_{q+1})$. Let $C'$ be such a component and let $M$ be
  the proper subgraph of $H-x$ associated with $C$. We show how to
  construct from $M$ a graph~$M'$ and a corresponding minor model with the
  appropriate properties for $C'$. Note that the vertex model~$H_a$ added
  in Case~2 automatically satisfies Conditions~\ref{it:1} and~\ref{it:peb}.

  We iteratively re-establish the properties for the vertex models $H_i$
  with $h_i\in V(M)$, in any order. Fix some $i$ with $h_i\in M$ and
  consider a path $E_{ij}$ such that the vertex $v_{ij}\in V(H_i)$ that is
  pebbled by $p_{ij}$ has no connection to $C'$. Let
  $E_{ij}=w_1,\ldots,w_s$, where $w_1=v_{ij}$. Let~$a$ be minimal such
  that~$w_a$ has a connection to $C'$, or let $a=s-1$ if no such vertex
  exists on~$E_{ij}$. We add all vertices $w_1,\ldots, w_a$ to~$H_i$. If
  $w_a$ has a connection to $C'$, we redefine $E_{ij}$ as the path
  $w_a,\ldots, w_s$ and place the pebble~$p_{ij}$ on $w_a$. If $w_a$ has no
  connection to $C'$, we delete the edge $\{h_i,h_j\}$ from~$M'$. If after
  fixing every path $E_{ij}$ for $H_i$ in the above way, $H_i$ has no
  connections to $C'$, we delete $h_i$ from~$M'$. Otherwise, if there are
  pebbles that do not lie on a vertex with a connection to $C'$, we place
  these pebbles on arbitrary vertices that are occupied by another pebble,
  that is, that have a connection to~$C'$.

  After performing these operations for every $H_i$, all conditions are
  satisfied. Condition~\ref{it:peb} is re-established for every $H_i$: if
  $h_i$ is not removed from $M'$, then every pebble that lies on a vertex
  that has no connection to $C'$ is pushed along a path until it lies on a
  vertex that does have a connection to $C'$, or finally, if there is no
  such connection on the path that it guards, it is placed at an arbitrary
  vertex that has a connection to $C'$. The operations on $H_i$ also
  re-establish Condition~\ref{it:3}(a) for one endpoint of $E_{ij}$. And
  after the operations are performed on $H_j$, Condition~\ref{it:3}(a) is
  re-established for $E_{ij}$. Furthermore, if $C'$ does not have a
  connection to a vertex model $H_i$, it may clearly be removed without
  violating Condition~\ref{it:all}. All other conditions are clearly
  satisfied.

  It remains to show that the graph $M$ for a component $C$ is always a
  proper subgraph of $H-x$. This however is easy to see. Assume that
  $M=H-x$ and all conditions are satisfied. By Condition~\ref{it:peb},
  every $H_i$, $1\le i\le k$, has a connection to $C$. Then, by adding $C$
  as a subgraph~$H_{k+1}$ to the minor model, we find $H$ as a minor, a
  contradiction.
\end{proof}

\begingroup
\def\thethm{\relax}
\begin{thm} \ {\rm(Theorem~\ref{thm:col})}\mbox{}\\*
  Let $H$ be a graph and $x$ a vertex of $H$. Set $h=|E(H-x)|$, and let
  $\alpha$ be the number of isolated vertices of $H-x$. Then for every
  graph $G$ that excludes $H$ as a minor, we have
  \begin{equation*}
    \col_r(G)\:\le\: h\cdot(2r+1)+\alpha.
  \end{equation*}
\end{thm}
\endgroup

\begin{proof}
  We strengthen the analysis in the proof of \Cref{lem:spd} by taking into
  account the special properties of the isometric paths decomposition
  constructed in the proof of \Cref{lem:minordecomp}.
  
  Let $\mathcal{P}=(P_1,\ldots, P_\ell)$ be an isometric paths
  decomposition of $G$ that was constructed as in the proof of
  \Cref{lem:minordecomp}, and let $L$ be an order defined from the
  decomposition. Let $v\in V(G)$ be an arbitrary vertex and choose~$q$ such
  that $v\in V(P_{q+1})$. Let $C$ be the component of $G[P_{\ge(q+1)}]$
  that contains $v$, and let $Q_1,\ldots,Q_m$, $1\le m\le q$, be the paths
  among $P_1,\ldots,P_q$ that have a connection to~$C$. By definition
  of~$L$, the vertices in $\Sreach_r[G,L,v]$ can only lie on
  $Q_1,\ldots,Q_m$ and on $P_{q+1}$.
  
  In the proof of \Cref{lem:minordecomp}, we associated with the component
  $C$ a pebbled minor model of a proper subgraph $M$ of $H-x$. The paths
  $Q_1,\ldots, Q_m$ were either associated with a vertex model~$H_i$
  representing a vertex $h_i$ of $M$, or with a path $E_{ij}$ representing
  an edge $e_{ij}$ of $M$. Just as in the proof of \Cref{lem:spd}, we can
  argue that
  $\bigl|\Sreach_r[G,L,v]\cap Q_j\bigr|\le \min\{\,|V(Q_j)|,\,2r+1\,\}$ for
  each path $Q_j$. However, the paths that lie inside a vertex model $H_i$
  can have only as many connections to $C$ as there are pebbles on it,
  since, by Condition~\ref{it:peb} of the proof of \Cref{lem:minordecomp},
  every connection of~$H_i$ to $C$ must be pebbled. Let $q$ be the number
  of paths $E_{ij}$ that have vertices connected to~$C$ and in the
  $r$-neighbourhood of $v$. By Condition~\ref{it:3}(c) from the proof, for
  every such path~$E_{ij}$ with endpoints~$v_i$ and $v_j$, the pebbles
  $p_{ij}$ and $p_{ji}$ lie on vertices $v_{ij}$ and~$v_{ji}$ such that the
  path $E'_{ij}=\{v_{ij},v_i\}+E_{ij}+\{v_j,v_{ji}\}$ is isometric. Thus
  $N_r[v]$ meets only at most~$h$ many paths $E'_{ij}$. It follows from
  \Cref{lem:spd} that $\col_r(G)\le h(2r+1)+\alpha$.
\end{proof}

\section{The generalised colouring numbers of planar graphs}
\label{sec:wcolplanar}

In this section we prove Theorems~\ref{thm:colg} and~\ref{thm:wcolg},
providing upper bounds for $\col_r(G)$ and $\wcol_r(G)$ when~$G$ is a graph
with bounded genus. Since for every genus~$g$ there exists a~$t$ such that
every graph with genus at most~$g$ does not contain $K_t$ as a minor, we
could use Theorems~\ref{thm:col} to obtain upper bounds for the generalised
colouring numbers of such graphs. But the bounds obtained in this section
are significantly better.

\subsection{The weak $r$-colouring number of planar graphs}\label{sub:5.1}

By a \emph{maximal planar graph} we mean a (simple) graph that is planar,
but where we cannot add any further edges without destroying planarity. It
is well known that a maximal planar graph~$G$ with $|V(G)|\ge3$ has a
unique plane embedding (up to the choice of the outer face), which is a
triangulation of the plane. We will use that implicitly regularly in what
follows.

We start by obtaining an upper bound for $\wcol_r(G)$ that is much smaller
than the bound given by \Cref{thm:col}. Our method for doing this again
uses isometric paths decompositions. For maximal planar graphs, we will
provide isometric paths decompositions of width at most~$2$. Using
\Cref{lem:spdwcol} and the fact that $\wcol_r(G)$ cannot decrease if edges
are added, we conclude that $\wcol_r(G)\le \binom{r+2}{2}\cdot(2r+1)\in
\Oof(r^3)$. In~\cite{Grohe15}, Grohe \emph{et al.}\ proved that for every
$r$ there is a graph~$G_{2,r}$ of tree-width 2 such that
$\wcol_r(G_{2,r})=\binom{r+2}{2}\in \Omof(r^2)$. Since graphs with
tree-width 2 are planar, this shows that the maximum of $\wcol_r(G)$ for
planar graphs is both in $\Omof(r^2)$ and~$\Oof(r^3)$.

\begin{lemma}\label{lem:wcolplanar}\mbox{}\\*
  Every maximal planar graph $G$ has an isometric paths decomposition of
  width at most 2.
\end{lemma}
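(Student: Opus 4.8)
The plan is to build the isometric paths decomposition of a maximal planar graph $G$ greedily, mimicking the proof of \Cref{lem:minordecomp} but exploiting planarity to keep the width down to $2$ instead of $3h+\alpha$. We may assume $G$ is connected (indeed $2$-connected, since a maximal planar graph on at least $3$ vertices is). The invariant we want to maintain is: for each component $C$ of $G-\bigcup_{1\le p\le q}V(P_p)$, the set of previously-chosen paths $P_p$ that have a connection to $C$ consists of at most $2$ paths; moreover, because $G$ is a triangulation, the vertices of these paths that ``see'' $C$ all lie on the boundary of the face-region of the embedding in which $C$ sits, and that boundary is formed by (parts of) at most two paths. So the key structural fact to isolate first is: \emph{in a plane triangulation, if we delete a connected vertex set, each resulting component $C$ is bounded by a closed walk, and we will arrange that this walk is covered by the at most two old paths that touch $C$.}

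First I would set up the invariant carefully in embedding terms. When we delete $P_1,\dots,P_q$, each component $C$ lives inside a region whose boundary is a cyclic sequence of vertices from $\bigcup P_p$; we maintain that this boundary cycle is the concatenation of at most two arcs, each arc an (isometric) subpath of one of the previously chosen paths. To extend: pick a component $C$ with boundary arcs lying on old paths $P_a$ (and possibly $P_b$). Choose a vertex $v$ on $C$ adjacent to the boundary, run a BFS tree $T$ in $G[C]$ from $v$, and take $P_{q+1}$ to be a shortest path in $C$ from $v$ to some vertex of $C$ adjacent to the ``other side'' of the boundary — concretely, we want $P_{q+1}$ to be a single isometric path that, together with the old boundary, chops $C$ into pieces each of which again touches at most two paths. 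Because the ambient structure is a disk (planarity!), a single isometric path crossing from one boundary point to another splits the disk into exactly two sub-disks, each of whose boundary is again a concatenation of at most two arcs: one arc being a subpath of the old boundary, the other being a subpath of $P_{q+1}$. That is exactly why planarity buys us width $2$ rather than something depending on $h$.

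The main obstacle, and the step I expect to need the most care, is verifying that the new path $P_{q+1}$ really can be chosen as a \emph{single} isometric path that reduces each child component to ``at most two touching paths,'' rather than needing several paths as in the general minor case. In \Cref{lem:minordecomp} one needed up to $t-3$ isometric paths per subgraph $H_i$; here the triangulation structure must force that one path suffices. The argument is: the boundary of $C$ is at most two arcs $A_1$ (on $P_a$) and $A_2$ (on $P_b$); pick endpoints $v_1$ on $C$ adjacent to $A_1$ and $v_2$ on $C$ adjacent to $A_2$, chosen at minimum distance in $C$, and let $P_{q+1}$ be a shortest $v_1$–$v_2$ path in $C$. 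The two sub-disks then each have boundary made of one subarc of the old boundary plus the whole of $P_{q+1}$ (possibly plus a bit of the other old arc) — still at most two paths after noting that $A_1$ and $A_2$ together with $P_{q+1}$'s endpoint-edges form one combined arc along one side. One must also handle the base case (start with a single vertex $P_1$, whose removal leaves components each bounded by the triangles around that vertex — here we may need the first couple of paths to set up the two-arc structure, e.g.\ start from an edge or a triangular face) and the degenerate cases where $C$ touches only one old path (then any isometric path from $v$ into $C$ works, and $\mathcal{H}$ stays width $\le 2$). Finally one checks $f$-flatness is automatic since each $H_i=P_i$ is by construction an isometric path in $G-\bigcup_{1\le j<i}V(H_j)$, so $\mathcal{H}$ is an isometric paths decomposition, and \Cref{lem:shortestpath} already records the relevant spreading bound.
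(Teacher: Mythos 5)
Your high-level plan coincides with the paper's: build the isometric paths decomposition greedily, maintaining that each component of the deleted graph lives in a region of the plane whose boundary cycle is covered by at most two previously chosen paths, and then add one isometric path per step that pinches the region into sub-regions with the same property. The base case you gesture at (an edge, then the opposite vertex of the outer triangle) is also the paper's.

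Where your proposal has a genuine gap is in the choice of the endpoints $v_1,v_2$ for the new path $P_{q+1}$. You propose ``pick endpoints $v_1$ on $C$ adjacent to $A_1$ and $v_2$ on $C$ adjacent to $A_2$, chosen at minimum distance in $C$,'' and then claim each sub-disk is bounded by ``one subarc of the old boundary plus the whole of $P_{q+1}$ (possibly plus a bit of the other old arc) --- still at most two paths after noting that $A_1$ and $A_2$ together with $P_{q+1}$'s endpoint-edges form one combined arc.'' This last step is not valid: $A_1$ and $A_2$ sit on \emph{two distinct} old paths $P_a$ and $P_b$, and the width counts distinct paths touched, so a sub-region whose boundary contains bits of $A_1$, bits of $A_2$, and bits of $P_{q+1}$ has separating number $3$, not $2$. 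With your freedom in choosing $v_1,v_2$ this really can happen: if $v_1$ is adjacent to an interior vertex of the $P_a$-arc (rather than to its endpoint), then the piece of $C$ on the ``short'' side still sees both $P_a$ and $P_b$ across the corresponding gap edge of the boundary cycle, and after deleting $P_{q+1}$ it may additionally see $P_{q+1}$. Minimising the $v_1$--$v_2$ distance does not prevent this.

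The paper avoids this by a specific, forced choice: the boundary cycle $D$ contains \emph{exactly two} edges $e_1,e_2$ not on $P_a\cup P_b$ (this uses that $P_a,P_b$ are disjoint isometric paths), each $e_i$ lies on a triangular face interior to $D$, and $v_i$ is taken to be the third vertex of that face, which necessarily lies in $C$ and is adjacent to endpoints of \emph{both} arcs. Then $P_{q+1}$ together with the two triangles and $e_1,e_2$ forms a curve separating the $P_a$-arc from the $P_b$-arc, so every new sub-region's boundary is covered by $P_{q+1}$ and exactly one of $P_a,P_b$; one also checks (using that $P_a,P_b,P_{q+1}$ are isometric, hence chordless) that it really touches \emph{two} paths and not one. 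This particular choice of $v_1,v_2$ is what makes a single new isometric path suffice; without it the width-$2$ invariant can fail. I would rework the inductive step around these triangle-corner vertices rather than arbitrary neighbours of $A_1,A_2$.
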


\begin{proof}
  Fix a plane embedding of $G$. Since the proof is otherwise trivial, we
  assume $|V(G)|\ge4$.

  We will inductively construct an isometric paths decomposition
  $P_1,\dots,P_\ell$ such that each component $C$ of
  $G-\bigcup_{1\le j\le \ell}V(P_j)$ satisfies that the boundary of the
  region in which $C$ lies is a cycle in $G$ that has its vertices in
  exactly two paths from $P_1, \dots, P_\ell$.

  As the first path $P_1$, choose an arbitrary edge of the (triangular)
  outer face, and as $P_2$ choose the vertex of that triangle that is not
  contained in $P_1$. There is only one connected component in
  $G-(P_1\cup P_2)$, and it is in the interior of the cycle which has
  vertices $V(P_1)\cup V(P_2)$.

  Now assume that $P_1,\ldots, P_i$ have been constructed in the desired
  way, and choose an arbitrary connected component $C$ of
  $G-\bigcup_{1\le j\le i}V(P_j)$. Let $D$ be the cycle that forms the
  boundary of the region in which $C$ lies, and let $P_a,P_b$,
  $1\le a,b\le i$, be the paths that contain the vertices of $D$. Notice
  that at least one of these paths must have more than one vertex, and let
  $P_a$ be such a path.

  \begin{figure}
    \centering
    \includegraphics[origin=c,width=3 in,height=2 in]{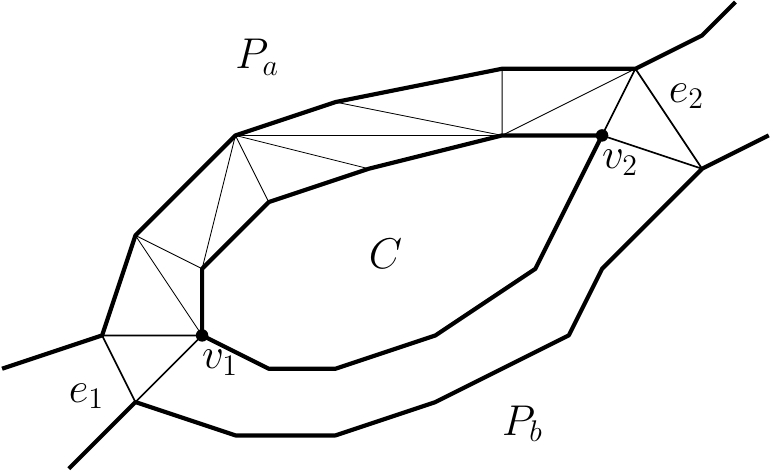}
    \caption{The path $P_{i+1}$ is chosen from the vertices of a connected
      component $C$.}
    \label{six}
  \end{figure}

  Since $P_a$ and $P_b$ are disjoint and isometric paths, $D$ must contain
  exactly two edges $e_1,e_2$ that do not belong to $P_a$ and $P_b$. (Of
  course, more than two edges can connect $P_a$ and $P_b$. But only two of
  them are on $D$.) Each of these edges belongs to a triangle in $G$ which
  is in the interior of $D$. By definition of $D$, the triangle that
  consists of $e_1$ and a vertex $v_1$ in the interior of~$D$ has the
  property that $v_1$ must lie in $C$. Similarly, the triangle that
  consists of~$e_2$ and a vertex $v_2$ in the interior of~$D$ has the
  property that~$v_2$ must lie in $C$ ($v_1=v_2$ is possible). See
  Figure~\ref{six} for a sketch of the situation.

  Any path $P$ in $C$ that connects~$v_1$ and $v_2$ has the property that
  every vertex of $C$ that is adjacent to $P_a$ is either in $P$ or in the
  region defined by $P_a$ and $P$ that does not contain $P_b$. Hence, as a
  next path $P_{i+1}$ we can take any isometric path in $C$ connecting
  $v_1$ and $v_2$.

  It is clear that any component $C'$ of $G-\bigcup_{1\le j\le i+1}V(P_j)$
  that was not already a component of $G-\bigcup_{1\le j\le i}V(P_j)$ is
  connected to at most two paths from $P_a, P_b, P_{i+1}$, and no such
  component is connected to both $P_a$ and $P_b$. To finish the
  construction of the decomposition we must prove that such a component
  $C'$ is connected to exactly two of these three paths. Let us assume that
  $C'$ lies in the interior of some cycle $D'$ contained in
  $V(P_a)\cup V(P_{i+1})$. Suppose for a contradiction that~$D'$ only has
  vertices from one of these paths, say from $P_a$. But since any cycle
  contains at least one edge not in $P_a$ and $D'$ has length at least~3,
  this implies that there is an edge between two non-consecutive vertices
  of $P_a$. This contradicts that $P_a$ was chosen as an isometric path.
  Exactly the same arguments apply when $C'$ lies in the interior of some
  cycle contained in $V(P_b)\cup V(P_{i+1})$.

  The isometric paths decomposition we constructed has width $2$, and thus
  the result follows.
\end{proof}

\clearpage
\begingroup
\def\thethm{\relax}
\begin{thm} \ {\rm(Theorem~\ref{thm:wcolg})}\mbox{}\\*
  For every graph $G$ with genus $g$, we have
  $\displaystyle\wcol_r(G)\le \biggl(2g+\binom{r+2}{2}\biggr)\cdot (2r+1)$.

  \smallskip\noindent
  In particular, for every planar graph $G$, we have
  $\displaystyle\wcol_r(G)\:\le\: \binom{r+2}{2}\cdot (2r+1)$.
\end{thm}
\endgroup

\begin{proof}
  We first prove the bound for planar graphs. According to
  \Cref{lem:wcolplanar}, maximal planar graphs have isometric paths
  decompositions of width at most $2$. Using \Cref{lem:spdwcol}, we see
  that any maximal planar graph $G$ satisfies
  $\wcol_r(G)\le \binom{r+2}{2}\cdot(2r+1)$. Since $\wcol_r(G)$ cannot
  decrease when edges are added, we conclude that any planar graph
  satisfies the same inequality.

  It is well known (see e.g.\ \cite[Lemma 4.2.4]{mohar01} or
  \cite{quilliot85}) that for a graph of genus $g>0$, there exists a
  non-separating cycle $C$ that consists of two isometric paths such that
  $G-C$ has genus $g-1$. We construct a linear order of $V(G)$ by starting
  with the vertices of such a cycle. We repeat this procedure inductively
  until all we are left to order are the vertices of a planar graph $G'$.
  We have seen that we can order the vertices of $G'$ in such a way that
  they can weakly $r$-reach at most $\binom{r+2}{2}\cdot(2r+1)$ vertices in
  $G'$. By \Cref{lem:deletevertices} and \Cref{lem:shortestpath} we see
  that any vertex in the graph can weakly $r$-reach at most $2g\cdot(2r+1)$
  vertices from the cycles we put first in the linear order. The result
  follows immediately.
\end{proof}

\subsection{The $r$-colouring number of planar graphs}

From \Cref{lem:wcolplanar} and \Cref{lem:spd}, we immediately conclude that
$\col_r(G)\le 3(2r+1)$ if $G$ is planar. This is already an improvement of
what we would obtain using Theorem~\ref{thm:col} with the fact that planar
graph do not contain $K_5$ or $K_{3,3}$ as a minor. Yet we can further
improve this by showing that $\col_r(G)\le 5r+1$, a bound which is tight
for $r=1$. The method we use to prove this again uses isometric paths, but
differs from the techniques we have used before because we will use
sequences of separating paths that are not disjoint.

Let~$G$ be a maximal planar graph and fix a plane embedding of~$G$. Let~$v$
be any vertex of~$G$ and let~$S$ be a lexicographic breadth-first search
tree of~$G$ with root~$v$. For each vertex~$w$, let $P_w$ be the unique
path in~$S$ from the root $v$ to $w$.

The following tree-decomposition $(T,(X_t)_{t\in V(T)})$ is a well-known
construction that has been used to show that the tree-width of a graph is
linear in its radius.
\begin{enumerate}[itemsep=0pt, topsep=1pt]
\item $V(T)$ is the set of faces of~$G$ (recall that all these faces are
  triangles);
\item $E(T)$ contains all pairs $\{t,t'\}$ where the faces~$t$ and~$t'$
  share an edge in~$G$ which is not an edge of~$S$;
\item for each face $t\in V(T)$ with vertices $\{a,b,c\}$, let
  $X_t=V(P_a)\cup V(P_b)\cup V(P_c)$.
\end{enumerate}

We define a linear order~$L$ on the vertices of~$G$ as follows. Let $t'$ be
the outer face of~$G$, with vertices $\{a,b,c\}$. We pick one of the paths
$P_a,P_b,P_c$, say $P_a$, arbitrarily as the first path and order its
vertices starting from the root $v$ and moving up to $a$. We pick a second
path arbitrarily, say~$P_b$, and order its vertices which have not yet been
ordered, starting from the one closest to $v$ and moving up to $b$. After
this, we do the same with the vertices of the third path~$P_c$.

We now pick the outer face as the root of the tree~$T$ from the
tree-decomposition and perform a depth-first search on~$T$. Each bag~$X_t$
contains the union of three paths, but at the moment $t$ is reached by the
depth-first search on~$T$, at most one of these paths contains vertices
which have not yet been ordered. We order the vertices of such a path
starting from the one closest to $v$ and moving up towards the vertex which
lies in $t$.

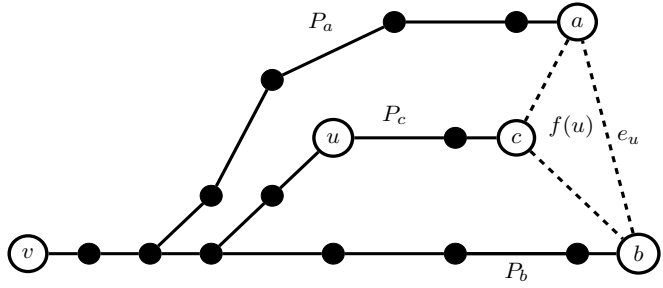
\begin{figure}
  \centering
  \resizebox{3.5 in}{1.5 in}{
    \begin{tikzpicture}
      \tikzstyle{dan} = [draw, ultra thick, circle, fill=none]
      \tikzstyle{ann} = [draw=none,fill=none]
      \tikzstyle{bar} = [draw, fill=black,circle]
      \node[dan] (n11) at (1,1) {$v$};
      \node[bar] (n21) at (2,1) {};
      \node[bar] (n31) at (3,1)  {};
      \node[bar] (n41) at (4,1)  {};
      \node[bar] (n61) at (6,1) {};
      \node[bar] (n81) at (8,1) {};
      \node[bar] (n101) at (10,1) {};
      \node[dan] (n111) at (11,1)  {$b$};
      \node[bar] (n42) at (4,2)  {};
      \node[bar] (n52) at (5,2)  {};
      \node[dan] (n63) at (6,3)  {$u$};
      \node[bar] (n83) at (8,3)  {};
      \node[dan] (n93) at (9,3)  {$c$};
      \node[ann] (n103) at (9.9,3.2)  {$f(u)$};
      \node[bar] (n54) at (5,4)  {};
      \node[bar] (n75) at (7,5)  {};
      \node[bar] (n95) at (9,5)  {};
      \node[dan] (n105) at (10,5)  {$a$};
      \node[ann] (n65) at (5.8,5)  {$P_a$};
      \node[ann] (n65) at (7,3.4)  {$P_c$};
      \foreach \from/\to in
      {n11/n21,n21/n31,n31/n41,n41/n61,n61/n81,n81/n101,n101/n111,
        n41/n52,n52/n63,n63/n83,n83/n93,
        n31/n42,n42/n54,n54/n75,n75/n95,n95/n105}
      \draw  [line width=1.5pt] (\from) -- (\to);
      \draw[line width=1.5pt] (n81) edge node[below] {$P_b$}(n101) ;
      \draw[line width=1.5pt] (n105) edge[dashed](n93) ;
      \draw[line width=1.5pt] (n93) edge[dashed](n111) ;
      \draw[line width=1.5pt] (n111) edge[dashed] node[right] {$e_u$}(n105) ;
    \end{tikzpicture}
  }
  \caption{Situation for a vertex~$u$ such that~$f(u)$ is not the outer
    face. Solid edges represent those in $G[X_{f(u)}]\cap S$. The
    cycle~$C(u)$ is the one contained in~$P_a \cup P_b \cup e_u$. The
    vertices $u$ and $c$ lie in $O(u)$.}
    \label{seven}
\end{figure}

For $u\in V(G)$, let~$f(u)$ be the first face (in the depth-first search
traversal of~$T$) for which the bag~$X_{f(u)}$ contains~$u$. If~$u$ is a
vertex for which~$f(u)$ is the outer face, then let~$C(u)$ be the cycle
formed by the three edges in the outer face. Otherwise, if~$f(u)$ is not
the outer face, then let~$e_u$ be the unique edge of~$f(u)$ not in~$S$ such
that the other face containing~$e_u$ was found by~$T$ before~$f(u)$, and
let~$C(u)$ be the cycle formed in~$S+e_u$. Finally, let~$O(u)$ be the set
of vertices lying in the interior of~$C(u)$. See Figure~\ref{seven} for a
sketch of the situation.

The following lemma tells us that if $f(u)$ is not the outer face, then the
paths in $X_{f(u)}$ separate~$u$ from any other smaller vertex in $L$.

\begin{lemma}\label{lem:carord}\mbox{}\\*
  For all $u \in V(G)$, we have that the vertices of $X_{f(u)}$ are
  smaller, with respect to~$L$, than all vertices in
  $O(u)\setminus X_{f(u)}$.
\end{lemma}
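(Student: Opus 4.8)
The plan is to analyse the structure of the depth-first search on the tree $T$ and the way $L$ orders vertices along root-paths. The key point is that $L$ orders the vertices "bag by bag" following the DFS traversal of $T$: when a face $t$ is first visited, at most one of the three root-paths in $X_t$ contains unordered vertices, and those get ordered next. First I would fix $u$ with $f(u)$ not the outer face (the outer-face case being essentially immediate, since its three root-paths are ordered first of all). Write $f(u)=t$ with vertices $\{a,b,c\}$, so $X_t=V(P_a)\cup V(P_b)\cup V(P_c)$, and let $e_u$ be the distinguished edge of $t$ not in $S$ whose other incident face was reached before $t$; the cycle $C(u)\subseteq S+e_u$ bounds a region whose interior is $O(u)$.

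The main structural claim to establish is: \emph{every vertex of $X_{f(u)}$ has already been ordered by the time the DFS reaches $t$.} Granting this, the lemma follows, because all vertices ordered from face $t$ onward — in particular every vertex of $O(u)\setminus X_{f(u)}$, which lies strictly inside $C(u)$ and hence inside the subtree of $T$ rooted at $t$ (faces inside $C(u)$ form a connected subtree of $T$ hanging below $t$) — come strictly later in $L$ than everything in $X_{f(u)}$. So the heart of the argument is to show $X_{f(u)}$ is fully ordered at the moment $t$ is visited. I would argue this by noting that $X_t=V(P_a)\cup V(P_b)\cup V(P_c)$ and considering the edge $e_u=\{x,y\}$ (the non-tree edge of $t$): the face $t'$ on the other side of $e_u$ was visited before $t$, so $t$ is reached from $t'$ via the DFS, and the tree-decomposition property plus the definition of $L$ force all of $V(P_x)\cup V(P_y)$ to be ordered already (they lie in $X_{t'}$, visited earlier). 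The remaining third vertex $z$ of $t$ satisfies $\{x,y,z\}$ a triangular face; since $e_u=\{x,y\}\notin S$, the two tree edges among $\{x,y,z\}$ are $\{x,z\}$ and $\{y,z\}$, so $z$ is a common child in $S$ of $x$ and $y$, whence $P_z=P_x+z$ or $P_z=P_y+z$ up to the last edge — in any case $V(P_z)\setminus\{z\}\subseteq V(P_x)\cup V(P_y)$ is already ordered, and $z$ itself is ordered no later than when $t$ is processed (the path through $z$ is precisely the one whose unordered tail gets ordered at $t$, or $z$ was ordered even earlier). Hence all of $X_t$ is ordered at time $t$.

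I expect the main obstacle to be the bookkeeping around \emph{which} of the three paths $P_a,P_b,P_c$ is the one carrying unordered vertices when $t$ is first reached, and checking carefully that the other two are genuinely already ordered — this is where one must use the "at most one path contains unordered vertices" property of the DFS-driven ordering together with the fact that $T$ restricted to the faces interior to $C(u)$ is a connected subtree rooted at $t$. A secondary subtlety is the degenerate cases: when $v_1=v_2$ type coincidences occur among $a,b,c$, or when $u$ itself equals one of $a,b,c$ (then $u\in X_{f(u)}$ and the statement is about $O(u)\setminus X_{f(u)}$, so there is nothing to check for $u$); these should be handled by a remark rather than separate cases. Once the structural claim is in place, the conclusion that every vertex of $O(u)\setminus X_{f(u)}$ is ordered strictly after all of $X_{f(u)}$ is just the observation that such vertices belong to bags $X_{t''}$ for faces $t''$ strictly below $t$ in the DFS, which are processed after $t$, and the ordering of $L$ respects the DFS order of first appearance.
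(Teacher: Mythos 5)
Your overall architecture is reasonable, and the first half of your plan (all of $X_{f(u)}$ is ordered by the time the DFS finishes processing $t=f(u)$, because the two root-paths through the endpoints of $e_u$ already lie in the earlier bag $X_{t'}$, and the remaining path's unordered tail is ordered when $t$ is processed) is consistent with the construction of $L$. One sub-claim is simply false, though: from $e_u=\{x,y\}\notin S$ it does not follow that $\{x,z\}$ and $\{y,z\}$ are both tree edges --- a triangular face may have $0$, $1$ or $2$ of its edges in $S$, and in a rooted tree $z$ cannot be a ``common child'' of $x$ and $y$ anyway. Fortunately you do not need this: the inductive statement that a bag is fully ordered once its face has been processed already handles $P_z$.

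The essential gap is in the second half. You conclude that every $y\in O(u)\setminus X_{f(u)}$ comes after all of $X_{f(u)}$ because ``such vertices belong to bags $X_{t''}$ for faces $t''$ strictly below $t$ in the DFS''. But a vertex is ordered when the \emph{first} bag containing it is processed, and a bag $X_s$ contains $y$ whenever $y$ lies on the root-path $P_w$ of \emph{some} vertex $w$ of the face $s$; such an $s$ need not be incident to $y$, nor lie inside $C(u)$. So you must rule out that some face processed before $t$ has one of its three root-paths penetrating into $O(u)$. This is precisely the content of the paper's proof and it is not ``just an observation'': if such a root-path $P_1$ had a vertex in $O(u)$, then, its endpoint in the earlier face lying outside $O(u)$, it would have to meet the cycle $C(u)$ at some vertex $w\ne v$; since $C(u)-e_u$ is contained in two root-paths of $X_{f(u)}$ that avoid $O(u)$, one obtains two distinct paths in $S$ from $w$ to the root, hence a cycle in $S$ --- a contradiction. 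Without this crossing argument (or an equivalent justification of your assertions that the faces inside $C(u)$ form exactly the DFS subtree below $t$ and that vertices of $O(u)$ occur only in bags of those faces), the proof is incomplete at its crucial step.
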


\begin{proof}
  If $f(u)$ is the outer face, then by the construction of $L$ the vertices
  of $X_{f(u)}$ are smaller than all other vertices in $V(G)$.

  Assume next that $f(u)$ is not the outer face and let $z<_L u$. If
  $f(z)=f(u)$, then also $X_{f(z)}=X_{f(u)}$ and by the definition of
  $X_{f(z)}$ we have that $z$ is contained in $X_{f(u)}$. If $f(z)\ne
  f(u)$, then it must be that $f(z)$ is a face which is encountered before
  $f(u)$ in the depth-first search of $T$. We know that $X_{f(z)}$ is the
  union of three paths. Assume for a contradiction that one of these paths,
  say~$P_1$, contains a vertex $x$ in $O(u)$. One of the endpoints of $P_1$
  is in~$f(z)$ and therefore cannot be in $O(u)$. The fact that $P_1$ has
  both vertices in $O(u)$ and vertices not in $O(u)$, means that there must
  be a vertex $w\ne v$ of $P_1$ in $C(u)$. Notice that $C(u)-e_u$ is a
  subset of two of the paths of $X_{f(u)}$. Therefore, $w$ also belongs to
  a path $P_2$ contained in $X_{f(u)}$ that does not have any of its
  vertices in $O(u)$. That means we have two paths between $w$ and the root
  $v$, one is a subpath of $P_1$ containing $x$, and the other one is a
  subpath of $P_2$ that does not contain $x$. However, any of the paths
  that form $X_{f(z)}$ and~$X_{f(u)}$ are paths of $S$, and this means we
  have found a cycle in $S$, a contradiction. We conclude that no path of
  $X_{f(z)}$ contains a vertex of $O(u)$ and so $z$ does not lie in $O(u)$.
\end{proof}

\noindent
We will use the ordering $L$ and~\Cref{lem:carord} to prove that
$\col_r(G)\le 5r+1$ for any planar graph~$G$. For the purpose of the
following proof, it is particularly important that $S$ is a
\emph{lexicographic} breadth-first search tree.

\begingroup
\def\thethm{\relax}
\begin{thm} \ {\rm(Theorem~\ref{thm:colg})}\mbox{}\\*
  For every graph $G$ with genus $g$, we have
  $\displaystyle\col_r(G)\le (4g+5)r+2g+1$.

  \smallskip\noindent
  In particular, for every planar graph $G$, we have
  $\displaystyle\col_r(G)\le 5r+1$.
\end{thm}
\endgroup

\begin{proof}
  Also this time we first prove the bound for planar graphs. Since
  $\col_r(G)$ cannot decrease when edges are added, we can assume that $G$
  is maximal planar. Therefore, we can order its vertices according to a
  linear order $L$ as defined above.

  Fix a vertex $u\in V(G)$ such that $f(u)$ is not the outer face, and let
  $a,b,c$ be the vertices of $f(u)$. Recall that this means that
  $X_{f(u)}=V(P_a)\cup V(P_b)\cup V(P_c)$. Choose $P_c$ to be the unique
  path of $X_{f(u)}$ containing $u$. Let $P_u$ be the subpath of $P_c$ from
  $u$ to the root $v$. Notice that $C(u)-e_u\subseteq P_a\cup P_b $. Then
  by~\Cref{lem:carord}, $P_a$ and $P_b$ separate $u$ from all smaller
  vertices not in $X_{f(u)}$. Therefore, using the definition of the
  ordering $L$, we see that all vertices in
  $V(P_a)\cup V(P_b) \cup V(P_u) \setminus\{u\}$ are smaller than $u$ in
  $L$, and that all the vertices in $O(u) \setminus V(P_u)$ are larger than
  $u$ in $L$. Hence, we have
  \begin{equation}\label{barrier}
    \Sreach_r[G,L,u]\: \subseteq\: N_r^G[u]\cap
    \bigl(V(P_a)\cup V(P_b)\cup V(P_u)\bigr).
  \end{equation}
  Since $S$ is a breadth-first search tree, using \Cref{lem:shortestpath},
  we see that $|N_r^G[u]\cap V(P_a)|\le 2r+1$ and
  $|N_r^G[u]\cap V(P_b)|\le 2r+1$. Also, by the definition of $L$ we have
  $|N_r^G[u] \cap V(P_u)|\le r+1$. These inequalities together
  with~(\ref{barrier}) tell us that $\bigl|\Sreach_r[G,L,u]\bigr|\le 5r+3$.

  In the remaining part of this proof we will show that in fact there are
  at least 2 fewer vertices in $\Sreach_r[G,L,u]$.

  We say that the \emph{level $d_u$} of a vertex $u$ is the distance~$u$
  has from $v$, \ie the height of $u$ in the breadth-first search tree $S$.
  For equality to occur in $|N_r^G[u] \cap V(P_a)|\le 2r+1$, there must be
  vertices $z_1,z_2\in V(P_a)$ in $N_r^G[u]$ such that the level of $z_1$
  in $S$ is $d_u-r$ and the level of $z_2$ is $d_u+r$. We will show that at
  most one of $z_1$ and $z_2$ can belong to
  $\Sreach_r[G,L,u]\setminus V(P_u)$.

  Suppose $z_2\in\Sreach_r[G,L,u]$ and let $P_2$ be a path from $u$ to
  $z_2$ that makes $z_2$ strongly $r$-reachable from $u$. Since $z_2$ is at
  level $d_u+r$, $P_2$ has length $r$ and all of its vertices must be at
  different levels of $S$. For any path $P$ with all of its vertices at
  different levels of $S$, we will denote by $P(d)$ the vertex of $P$ at
  level $d$. By definition of~$L$, we know that $P_a(d_u+i)<_L z_2$ for all
  $0\le i\le r-1$. This, together with the definition of $P_2$, tells us
  that $P_2$ cannot share any vertex with $P_a$ other than $z_2$. Moreover,
  the edge incident to $z_2$ in $P_2$ cannot belong to~$S$, because there
  already is an edge in $E(P_a)\subseteq E(S)$ joining a vertex at level
  $d_u+r-1$ to $z_2$. This means that the vertex $P_a(d_u+r-1)$ was found
  by the lexicographic breadth-first search~$S$ before the vertex
  $P_2(d_u+r-1)$. This in its turn implies that $P_a(d_u+r-2)$ was found by
  $S$ before $P_2(d_u+r-2)$. Continuing inductively we find that this is
  true for every level $d_u+i$, $0\le i\le r-1$. In a similar way, we can
  check that this implies that $S$ found the vertex $P_a(d_u-i)$ before the
  vertex $P_u(d_u-i)$, for $1\le i\le r$, whenever these vertices differ.
  In particular, $z_1$ was found before $P_u(d_u-r)$ if $z_1\notin V(P_u)$.

  Let us use this last fact to show that if $z_1\in\Sreach_r[G,L,u]$, then
  $z_1$ must also belong to $P_u$. We do this by assuming that
  $z_1\notin V(P_u)$. This tells us that the vertices $P_a(d_u-i)$ and
  $P_u(d_u-i)$ are distinct for all $0 \le i \le r$. Therefore, given that
  $z_1$ was found by $S$ before $P_u(d_u-r)$, there exists no edge between
  $z_1$ and $P_u(d_u-r+1)$, because if it did exist, then the edge joining
  $P_u(d_u-r)$ and $P_u(d_u-r+1)$ would not be in $S$. It follows that any
  vertex at level $d_u-r+1$ belonging to~$N(z_1)$ was found by $S$ before
  $P_u(d_u-r+1)$. By the same argument there is no edge between~$N(z_1)$
  and $P_u(d_u-r+2)$. Inductively, we find that for $0 \le i \le r-1 $, any
  vertex at level $d_u-r+i$ belonging to $N_i^G[z_1]$ was found by $S$
  before $P_u(d_u-r+i)$, and so there is no edge between~$N_i^G[z_1]$ and
  $P(d_u-r+i+1)$. But for $i=r-1$ this means that $u\notin N_r^G[z_1]$
  which implies that $z_1\notin\Sreach_r[G,L,u]$. Hence we can conclude
  that if $z_2\in\Sreach_r[G,L, u]$, then $z_1$ can only be strongly
  $r$-reachable from $u$ if it also belongs to $P_u$.

  Now suppose $z_1\in\Sreach_r[G,L,u]\setminus V(P_u)$, and let $P_1$ be a
  path from $u$ to $z_1$ that makes $z_1$ strongly $r$-reachable from $u$.
  Since $z_1$ is at level $d_u-r$, $P_1$ has length $r$ and all of its
  vertices are at different levels of $S$. Let $d_u-j$ be the minimum level
  of a vertex in $V(P_1)\cap V(P_u)$. Notice that $j<r$, since
  $z_1\notin V(P_u)$. Since $E(P_u)\subseteq E(S)$, it is clear that the
  vertex $P_u(d_u-j-1)$ was found by~$S$ before $P_1(d_u-j-1)$. This tells
  us that $P_u(d_u-j-2)$ was found before $P_1(d_u-j-2)$. Using induction,
  we can check that this will also be true for all levels $d_u-i$,
  $j+1\le i\le r$. In particular, this means that $P_u(d_u-r)$ was found
  before $z_1$. This implies that the lexicographic search found the vertex
  $P_u(d_u-i)$ before the vertex $P_a(d_u-i)$, for all $0\le i\le r$. Hence
  $S$ found~$u$ before~$P_a(d_u)$. Now suppose for a contradiction that
  there is a path $P_2$ that makes $z_2$ strongly $r$-reachable from $u$.
  The path $P_2$ can only intersect $V(P_a)$ at~$z_2$ and, since $u$ was
  found before~$P_a(d_u)$, it must be that $P_2(d_u+i)$ was found by $S$
  before $P_a(d_u+i)$, for $1\le i\le r-1$. Then the edge going from level
  $d_u+r-1$ to level $d_u+r$ in $P_a$ does not belong to $S$. This is a
  contradiction, given the definition of $P_a$.

  By the analysis above, we have that
  \begin{equation*}
    \bigl|\bigl(V(P_a)\setminus V(P_u)\bigr)\cap \Sreach_r[G,L,u]\bigr|\le
    2r.
  \end{equation*}
  In a similar way we can show that
  $\bigl|\bigl(V(P_b)\setminus V(P_u))\bigr)\cap \Sreach_r[G,L,u]\bigr|\le
  2r$. Then by~(\ref{barrier}) it follows that
  $\bigl|\Sreach_r[G,L,u]\bigr|\le 5r+1$ for this choice of $u$.

  We still have to do the case that $u$ is a vertex such that $f(u)$ is the
  outer face. We notice that it might be possible that when $u$ was added
  to the order~$L$, fewer than two paths reaching $f(u)$ had been ordered.
  In this case it is clear that
  $\bigl|\Sreach_r[G,L,u]\bigr|\le (2r+1)+(r+1)\le 5r+1$. If~$u$ is on the
  third chosen path leading from the root to the vertices of $f(u)$, then
  we can use the arguments above to show that
  $\bigl|\Sreach_r[G,L,u]\bigr|\le 5r+1$.

  \smallskip
  Having proved the bound on $\col_r(G)$ for planar graphs, the bound for
  graphs with genus $g>0$ can be easily proved following the same procedure
  as in the proof of Theorem~\ref{thm:wcolg} in the previous subsection.
\end{proof}

\bigskip\noindent
\textbf{Acknowledgement}

\smallskip\noindent
The authors would like to thank David Wood for pointing out that their
results imply an improvement on the upper bound of the acyclic chromatic
number of $K_t$-minor free graphs. They also thank the anonymous referees
for their corrections and suggestions.

\bibliographystyle{plain}
\bibliography{adm}

\end{document}